\newcommand{\W}{ \mathcal W_{r, \lambda}}
\newcommand{\R}{\mathbb{R}}
\newcommand{\RN}{{\mathbb{R}^N}}
\newcommand{\U}{{\mathcal{U}_{\lambda, \xi_1}}}
\newcommand{\Ud}{{\mathcal{U}_{\lambda, \xi}}}
\newcommand{\Uj}{{\mathcal{U}_{\lambda, \xi_j}}}
\newcommand{\Ui}{{\mathcal{U}_{\lambda, \xi_i}}}
\newcommand{\Uh}{{\mathcal{U}_{\lambda, \xi_h}}}
\newcommand{\beq}{\begin{equation}}
\newcommand{\eeq}{\end{equation}}
\DeclareMathAlphabet{\mathbbold}{U}{bbold}{m}n
\theoremstyle{plain}\newtheorem{theorem}{Theorem}[section]\newtheorem{lemma}[theorem]{Lemma}\newtheorem{proposition}[theorem]{Proposition}}\numberwithin{equation}{section}{\theoremstyle{remark}\newtheorem{remark}{\bf Remark}}
\title[Choquard critical equation]{Infinitely many non-radial solutions to a critical Choquard equation}
\author[S. Caputo]{Sabrina Caputo}
\address{\noindent  Dipartimento di Matematica, Universit\'a degli Studi di Bari Aldo Moro, Italy }
\email{s.caputo28@phd.uniba.it}
\author[G. Vaira]{Giusi Vaira}
\address{\noindent  Dipartimento di Matematica, Universit\'a degli Studi di Bari Aldo Moro, Italy }
\email{giusi.vaira@uniba.it}
\thanks{Work partially supported by 
the MUR-PRIN-P2022YFAJH ``Linear and Nonlinear PDE's: New directions and Applications" and by the INdAM-GNAMPA project ``Fenomeni non lineari: problemi locali e non locali e loro applicazioni",  CUP E5324001950001}
\date{}
\begin{document}

\allowdisplaybreaks{}
\everymath{\displaystyle}

\maketitle

\begin{abstract}
In this paper we study a class of critical Choquard equations with a symmetric potential, i.e. we consider
the equation $$-\Delta u +V(|x|) u =\left(|x|^{-\mu}* |u|^{2^\star_\mu}\right)|u|^{2^\star_\mu-2}u,\quad\mbox{in}\quad\mathbb R^N$$
where $V(|x|)$ is a bounded, nonnegative and symmetric potential in $\RN$ with $N\geq 5$, $0<\mu\leq 4$, $*$ stands for the standard convolution and $2^\star_\mu:=\frac{2N-\mu}{N-2}$ is the upper critical exponent  in the sense of the Hardy - Littlewood - Sobolev inequality. By applying a finite dimensional reduction method we prove that if $r^2V(r)$ has a local maximum point or local minimum point $r_0>0$ with $V(r_0)>0$ then the problem has infinitely many non-radial solutions with arbitrary large energies. 
\end{abstract}

\section{Introduction}
In this paper we consider the nonlinear Choquard equation \begin{equation}\label{C1}-\Delta u +V(x) u =\left(|x|^{-\mu}* |u|^q\right)|u|^{q-2}u,\quad\mbox{in}\quad\mathbb R^N.\end{equation} This equation has attracted the attention and the interest
of a lot of researchers since it arises in various physical models especially in $\mathbb R^3$ with $q=2$ and $\mu=1$. Indeed, in this case, it seems to originate from H. Fr\"ohlich and S. Pekar's model of the polaron, where free electrons in an ionic lattice interact with phonons associated to deformations of the lattice or with the polarization that it creates on the medium (interaction of an electron with its own hole) \cite{F, F1, P}. The Choquard equation was also introduced by Ph. Choquard in 1976 in the modelling of a one-component plasma. The first that considers \eqref{C1} from a mathematical view point was Lieb \cite{L} who proved the existence and uniqueness, up to translations, of the ground state for \eqref{C1} with $N = 3, \mu = 1, q = 2$ and $V$ a positive constant.
The results obtained by Lieb were subsequently improved and extended by Lions \cite{Lions1, Lions2}. In particular, he showed the existence of a sequence of radially symmetric solutions.\\
When $V$ is a constant, Moroz and Van Schaftingen \cite{MVS} proved regularity, positivity and radial symmetry of the ground states for an optimal range of $q$, and described asymptotic decay of solutions at infinity. \\
Ackermann \cite{A}, instead, proposed a new approach to prove the existence of infinitely many geometrically distinct weak solutions in the case of a periodic potential $V$. \\
There are also a lot of works concerning the existence and concentration behavior of semiclassical solutions for the singularly perturbed subcritical Choquard equation (see f.i. \cite{ACTY, AGSY, Cingolani, DGY, MVS1, WW}). Among them, by using a Lyapunov-Schmidt type reduction method, Wei and Winter \cite{WW}constructed families of positive solutions and  Chen \cite{Chen} constructed multiple semiclassical solutions to the Choquard equation with an external potential by Lyapunov-Schmidt reduction argument. Moreover, Luo et al. \cite{Luo} established the uniqueness of positive solutions that concentrate at the non-degenerate critical points of the potential by combining a local type of Pohozaev identities and blow-up techniques.$$$$
We also refer to \cite{MVS2} for a guide regarding the Choquard equation and references therein.\\\\
In what follows we consider the so-called {\it critical Choquard equation}. In order to clarify the meaning of critical we recall the well-known Hardy - Littlewood - Sobolev inequality (see \cite{LL}).
\begin{proposition}\label{HLS}
Let $t, r>1$ and $0<\mu<N$ with $\frac 1 t +\frac \mu N +\frac 1 r =2$. There exists a sharp constant $C=C(t, N, \mu, r)$ such that for any $f\in L^t(\RN)$ and $h\in L^r(\R)$ $$\int_\RN\int_\RN \frac{|f(x)||h(y)| }{|x-y|^\mu}\, dx\, dy \leq C \| f\|_t\|h\|_r.$$ If $t=r=\frac{2N}{2N-\mu}$ then \beq\label{CNmu}C(t, N, \mu, r)=C(N, \mu)=\pi^{\frac \mu 2}\frac{\Gamma\left(\frac N 2-\frac \mu 2\right)}{\Gamma\left( N -\frac \mu 2\right)}\left\{\frac{\Gamma\left(\frac N 2\right)}{\Gamma(N)}\right\}^{-1+\frac \mu N}.\eeq In this case the equality is achieved if and only if $f\equiv C h$ and $$h(x)=A\left(\gamma^2+|x-a|^2\right)^{-\frac{2N-\mu}{2}}$$ for some $A\in\mathbb C$, $0\neq \gamma\in\mathbb R$ and $a\in\mathbb R^N$.
\end{proposition}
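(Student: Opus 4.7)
The plan is to prove Proposition \ref{HLS} in two stages: first establish the inequality with a finite constant, then identify the sharp constant and extremizers in the diagonal case $t = r = \frac{2N}{2N-\mu}$. For the general inequality I would reformulate the bound via duality as the $L^r(\RN) \to L^{t'}(\RN)$ boundedness of the Riesz potential $I_\mu g(x) := \int_\RN |x-y|^{-\mu} g(y)\, dy$, where $1/t' = 1/r + \mu/N - 1$. The core estimate is that $I_\mu$ is weak-type $(1, N/(N-\mu))$ and weak-type $(N/\mu, \infty)$; both follow by splitting $\int_{|x-y|<R} + \int_{|x-y|\ge R}$ and optimizing in $R$. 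Marcinkiewicz interpolation then yields the strong $L^r \to L^{t'}$ bound, which is equivalent (by H\"older) to Proposition \ref{HLS} with some finite $C(t, N, \mu, r)$.

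For the sharp constant in the diagonal case I would follow Lieb's original strategy. Riesz's rearrangement inequality gives
$$\int_\RN\int_\RN \frac{f(x) h(y)}{|x-y|^\mu}\, dx\, dy \le \int_\RN\int_\RN \frac{f^*(x) h^*(y)}{|x-y|^\mu}\, dx\, dy,$$
so it suffices to maximize over symmetric decreasing $f, h$. In the diagonal case the functional is invariant under the full conformal group of $\RN \cup \{\infty\}$; composing with inverse stereographic projection transports the problem to $\mathbb{S}^N$, where the analogous bilinear form is $O(N+1)$-invariant and maximizing sequences can be shown to be precompact. Transferring a maximizer back to $\RN$, and combining the resulting Euler-Lagrange equation with the equality case of Riesz rearrangement, one identifies the extremizers (modulo translation, dilation and scaling) as $h(x) = A(\gamma^2 + |x-a|^2)^{-(2N-\mu)/2}$, with $f \equiv C h$. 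Plugging this explicit extremizer into the functional and evaluating the resulting integrals via standard Beta and Gamma function identities then produces the explicit constant $C(N, \mu)$ in \eqref{CNmu}.

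The principal obstacle is the existence of an extremizer. The functional is invariant under the non-compact group generated by translations, dilations, and inversions, so a naive maximizing sequence may concentrate, vanish, or spread to infinity. The conformal lift to $\mathbb{S}^N$ is precisely what restores compactness: on the sphere the invariance group $O(N+1)$ is compact, and the bilinear form corresponds to a compact operator on the right Sobolev scale, permitting extraction of a convergent subsequence. Without this geometric device one would have to run a full concentration-compactness analysis on $\RN$, tracking the three loss modes and ruling them out by symmetrization combined with the strictness in Riesz rearrangement.
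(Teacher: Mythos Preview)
Your outline is a faithful sketch of the standard proof of the Hardy--Littlewood--Sobolev inequality together with Lieb's identification of the sharp constant and extremizers in the diagonal case: weak-type bounds plus Marcinkiewicz for the general inequality, then Riesz rearrangement, conformal lift to $\mathbb{S}^N$, compactness on the sphere, and the Euler--Lagrange analysis for the extremizers. There are no genuine gaps at the level of a proof plan; each step you name is a known theorem with a well-documented proof.

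However, the paper does \emph{not} prove Proposition~\ref{HLS} at all. It is stated as a classical background result and attributed to the reference \cite{LL} (Lieb--Loss, \emph{Analysis}); no argument is given in the paper beyond the citation. So there is nothing to compare your proposal against: you have supplied substantially more than the authors do, and what you supplied is the correct content of the cited source. If anything, your write-up could serve as an expanded justification for why the authors are entitled to quote the result.
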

Notice that, by the Hardy - Littlewood - Sobolev inequality, the integral $$\int_\RN\int_\RN \frac{|u(x)|^q |u(y)|^q}{|x-y|^\mu}\, dx\, dy$$ is well-defined if $|u|^q\in L^t(\RN)$ for some $t>1$ satisfying $$\frac 2 t +\frac \mu N =2.$$
Thus, for $u\in H^1(\RN)$, by Sobolev embedding theorems, we know that $$2\leq tq\leq 2^\star=\frac{2N}{N-2}$$ namely
$$\frac{2N-\mu}{N}\leq q \leq \frac{2N-\mu}{N-2}.$$
Thus, $\frac{2N-\mu}{N}$ is called the lower critical exponent while $2^*_\mu=\frac{2N-\mu}{N-2}$ is the upper critical exponent in the sense of the Hardy - Littlewood - Sobolev inequality.\\\\
For the upper critical case, by using the moving plane methods in integral developed in \cite{ChenLiOu1, ChenLiOu2}, Lei \cite{Lei}, Du and Yang \cite{DuYang}, Guo et al. \cite{GuoHuPengShuai} classified independently the positive solutions of the critical Hartree equation 
\beq\label{cC} -\Delta u =\left(|x|^{-\mu}* |u|^{2^\star_\mu}\right)|u|^{2^\star_\mu-2}u,\quad \mbox{in}\quad \mathbb R^N\eeq and proved that every positive solution of \eqref{cC} must assume the form
\beq\label{bolla}
\Ud(x)=\alpha_{N,\mu}\frac{\lambda^{\frac{N-2}{2}}}{\left(1+\lambda^2|x-\xi|^2\right)^{\frac{N-2}{2}}},\quad \lambda>0,\,\, x, \xi\in\mathbb R^N\eeq
where $$\alpha_{N, \mu}:=S^{\frac{(N-\mu)(2-N)}{4(N-\mu+2)}}C(N, \mu)^{\frac{2-N}{2(N-\mu+2)}}\left(N(N-2)\right)^{\frac{N-2}{4}}$$
being $S$ the best constant of the Sobolev embedding and $C(N, \mu)$ defined in \eqref{CNmu}.\\\\ The aim of this paper is to establish the existence of infinitely many solutions for the critical Choquard equation with a symmetric potential $V$ of a special form, i.e. $V(x)=V(|x|)$, namely we will consider the following nonlocal equation in $\RN$
\beq\label{CE}
\left\{\begin{aligned} &-\Delta u+V(|x|)u =\left(|x|^{-\mu}* |u(y)|^{2^\star_\mu}\right)|u|^{2^\star_\mu -2}u\quad &\hbox{in}\,\, \RN\\
&u>0 \quad &\hbox{in}\,\, \RN\\
\end{aligned}\right.\eeq
where $V(r)$ is a bounded non-negative function, $N\geq 5$, $0<\mu\leq 4$, $2^\star_\mu=\frac{2N-\mu}{N-2}\geq 2$.\\ 
It follows from the Pohozaev identity that \eqref{CE} has no solution if $(r^2 V(r))'$ has fixed sign and is not identically zero. Therefore, we see that to obtain a solution for \eqref{CE}, it is necessary to assume that $r^2 V (r)$ has either a local maximum, or a local minimum at some $r_0 > 0$. The aim of this paper is to show that this condition also guarantees the existence of infinitely many non-radial solutions.\\
Our main result in this paper can be stated as follows:\\
\begin{theorem}\label{main}
Suppose that $V(|x|)\geq 0$ is bounded and $N\geq 5$. If $r^2 V (r)$ has either an isolated local maximum, or an isolated local minimum at some $r_0 >0$ with $V (r_0) > 0$, then problem \eqref{CE} has infinitely many non-radial solutions.\end{theorem}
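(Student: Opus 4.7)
\bigskip

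\noindent\textbf{Proof strategy.} The plan is to use a Lyapunov--Schmidt finite dimensional reduction in a space of highly symmetric functions, looking for solutions which are close to a sum of $k$ standard bubbles $\Uj$ placed at the vertices of a regular polygon in $\mathbb{R}^N$, and then to let $k\to+\infty$ to obtain a sequence of geometrically distinct non-radial solutions with diverging energies. More precisely, for a large integer $k$ I set
\begin{equation*}
\xi_j=\Bigl(r\cos\tfrac{2\pi(j-1)}{k},\,r\sin\tfrac{2\pi(j-1)}{k},\,\mathbf{0}\Bigr)\in\RN,\qquad j=1,\dots,k,
\end{equation*}
with $r$ close to the critical radius $r_0$ and $\lambda$ in a suitable range (to be fixed later by the reduced problem). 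Working in the closed subspace $H_s\subset H^1(\RN)$ of functions that are invariant under the cyclic group of rotations of angle $2\pi/k$ in the first two variables and even/radial in the remaining ones, I look for a solution of \eqref{CE} of the form
\begin{equation*}
u_k=W_{r,\lambda}+\phi,\qquad W_{r,\lambda}:=\sum_{j=1}^{k}\Uj,
\end{equation*}
with $\phi\in H_s$ small and orthogonal, in a natural sense, to the local kernel of the linearized operator.

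\medskip

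\noindent\textbf{The reduction.} The variational formulation is
\begin{equation*}
I(u)=\tfrac12\!\int_{\RN}\!\!(|\nabla u|^2+V(|x|)u^2)\,dx-\tfrac{1}{2\cdot 2^\star_\mu}\!\int_{\RN}\!\!\int_{\RN}\!\!\tfrac{|u(x)|^{2^\star_\mu}|u(y)|^{2^\star_\mu}}{|x-y|^\mu}\,dx\,dy.
\end{equation*}
Thanks to the symmetry of $H_s$, the kernel of the linearization of the Choquard operator at $W_{r,\lambda}$, restricted to $H_s$, is expected to be spanned only by $Z_\lambda:=\partial_\lambda W_{r,\lambda}$ and $Z_r:=\partial_r W_{r,\lambda}$; the $N-1$ translation directions at each $\xi_j$ are killed by the symmetry. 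I would therefore follow the classical scheme:
\begin{enumerate}[label=(\roman*)]
\item prove invertibility, uniformly in $k$, of the linearized operator on $H_s\cap\{Z_\lambda,Z_r\}^\perp$, using the nondegeneracy of $\Ud$ as a solution of \eqref{cC} (proved e.g.\ in \cite{DuYang,GuoHuPengShuai}) and a standard contradiction/blow-up argument adapted to the non-local critical Choquard nonlinearity;
\item estimate carefully the error $\mathcal{R}:=-\Delta W_{r,\lambda}+V W_{r,\lambda}-\bigl(|x|^{-\mu}*W_{r,\lambda}^{2^\star_\mu}\bigr)W_{r,\lambda}^{2^\star_\mu-1}$ in an appropriate weighted $L^p$ norm, using the explicit bubble \eqref{bolla} and exploiting the polynomial decay of the Riesz kernel;
\item solve, via contraction mapping, the projected equation, producing a map $(r,\lambda)\mapsto\phi_{r,\lambda}$ with $\|\phi_{r,\lambda}\|$ of lower order than the leading term in the reduced energy.
\end{enumerate}

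\medskip

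\noindent\textbf{The reduced problem and the role of $r^2V(r)$.} Once the reduction is in place, the original problem is equivalent to finding critical points of the function of two variables
\begin{equation*}
F_k(r,\lambda):=I(W_{r,\lambda}+\phi_{r,\lambda}).
\end{equation*}
The key is the asymptotic expansion, as $k\to\infty$,
\begin{equation*}
F_k(r,\lambda)=k\Bigl[A_0+\frac{A_1\,V(r)}{\lambda^2}-A_2\Bigl(\frac{k}{\lambda r}\Bigr)^{\!N-2}+o(\text{l.o.t.})\Bigr],
\end{equation*}
with positive constants $A_0,A_1,A_2$ computed from $\Ud$ and $C(N,\mu)$. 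Writing $\lambda=\Lambda\, k^{(N-2)/(N-4)}r_0^{-(N-2)/(N-4)}$ and expanding $V(r)\simeq V(r_0)+\tfrac{1}{r_0^2}\bigl(r^2V(r)-r_0^2V(r_0)\bigr)+O((r-r_0)^2)$ one sees that the leading part of $F_k$ reduces, up to rescaling and additive constants, to the function
\begin{equation*}
\mathcal{G}(r,\Lambda)=\frac{A_1\,[r^2V(r)]}{r^2\Lambda^2}-\frac{A_2}{(\Lambda r)^{N-2}},
\end{equation*}
whose critical points in $\Lambda$ for each fixed $r$ exist iff $r^2V(r)>0$, and, once $\Lambda$ is optimized, produce a one-variable function proportional to a positive power of $r^2V(r)$. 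Consequently an isolated local maximum or local minimum of $r^2V(r)$ at $r_0$ with $V(r_0)>0$ yields an isolated local max/min of $F_k$, stable under the small perturbation $o(\text{l.o.t.})$. Since the critical point $u_k$ is obtained in $H_s$ and has $k$ spikes, the solutions are non-radial and, as $k\to\infty$, their energies diverge, providing infinitely many distinct solutions.

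\medskip

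\noindent\textbf{Main obstacle.} The principal difficulty lies in step~(ii), i.e.\ the sharp asymptotic expansion of the energy: unlike the local Brezis--Nirenberg case, the Choquard nonlinearity produces mixed convolution terms $\int\!\!\int\tfrac{\Ui^{2^\star_\mu}\Uj^{2^\star_\mu}}{|x-y|^\mu}$ and $\int\!\!\int\tfrac{\Ui^{2^\star_\mu-1}\Uh\Uj^{2^\star_\mu}}{|x-y|^\mu}$ between bubbles placed at distance $|\xi_i-\xi_j|\sim r/k$, whose precise order (and sign) requires a careful asymptotic analysis of the Riesz potential of a bubble and a delicate control of the sum over $j$. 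Establishing that these interaction terms match the announced power $(k/(\lambda r))^{N-2}$ with the correct positive constant $A_2$ is the technical core of the paper and is what allows the critical-point structure of $r^2V(r)$ to be transferred to $F_k$.
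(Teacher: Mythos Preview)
Your strategy is essentially the paper's: same bubble ansatz at the vertices of a regular $k$-gon, same symmetric space $H_s$, same Lyapunov--Schmidt reduction, and the same asymptotic expansion of the reduced energy leading to a function whose optimum in $\Lambda$ is proportional to a positive power of $r^2V(r)$. Your identification of the interaction estimates as the technical core is also accurate.

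There is, however, one point where your argument is too quick. You write that an isolated local maximum or local minimum of $r^2V(r)$ ``yields an isolated local max/min of $F_k$, stable under the small perturbation''. This is fine in the maximum case, but not in the minimum case. For each fixed $r$, the map $\Lambda\mapsto \frac{A_1V(r)}{\Lambda^2}-\frac{A_2}{(\Lambda r)^{N-2}}$ has a unique \emph{maximum} at $\Lambda_0(r)$; hence if $r^2V(r)$ has a local minimum at $r_0$, the reduced functional has a \emph{saddle} structure (min in $r$, max in $\Lambda$), not a local minimum. A saddle point is not automatically stable under a $C^0$-small perturbation, and one cannot simply invoke persistence of isolated extrema. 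The paper handles this case (Case~2 in Section~\ref{teo}) via a genuine min-max argument borrowed from Chen--Wei--Yan \cite{CWY}: one checks that the negative gradient flow of $-\mathcal F$ does not leave a suitable box $D$ before reaching a lower sublevel, and then produces a critical value through a linking class $\Gamma$. Without this (or an equivalent degree/linking argument), the minimum case is not covered.
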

\begin{remark}
Theorem \ref{main} is known when $N=6$, $\mu=4$ and the potential $V (|x'|, x'')$ is a bounded and asymmetric nonnegative function in $\mathbb R^+\times \mathbb R^4$ (see \cite{GMVY}). Also in this case we strongly believe that the radial symmetry of $V$ can be relaxed as in \cite{GMVY} or in \cite{PWY}, by using a combination of the reduction method with local Pohozaev identities. Here, in order to avoid the technical difficulties that arise in the presence of the nonlocal term we only consider the symmetric case (as in \cite{CWY}).\\ As far as we know this is the first result concerning the existence of infinitely many solutions by using the reduction scheme in a general way for this kind of nonlocal problems. \end{remark}
\begin{remark}
It is also interesting to consider the case of sign-changing solutions for the Choquard equation since the sign-changing solutions is hard to obtain due to the nonlocal interaction (see \cite{GVS, RVS}).\\ We strongly believe that a finite dimensional argument can be now applied by constructing infinitely many sign-changing solutions for \eqref{cC} or for the analogous problem in a subcritical regime with a symmetric potential where a non-degeneracy for the limit problem is known. This will be done in a forthcoming paper.
\end{remark}
In order to prove Theorem \ref{main}, we will use a widely used technique in singularly perturbed elliptic problems which takes $k$ which is the number of the bubbles in the solutions as a large parameter. This technique was succefully used also in other contexts and for other problems (see f.i. \cite{WeiYan} and \cite{BV}).\\ Since we construct the solutions by using the so-called bubbles (see \eqref{bolla}) then our results provide also an asymptotic behavior of the solutions that we look for (see \eqref{eq:bolla} and \eqref{sol}).\\\\
In the next we will prove Theorem \ref{main} by constructing solutions with large number of bubbles near the sphere $|y|=r_0$. So, in view of the construction of bubbling solutions, we can say that the effect from the critical growth is stronger than the effect from the unboundedness of the domain.\\

We outline the main ides in order to prove Theorem \ref{main}. \\ Let us fix a positive integer $k\geq k_0$ where $k_0$ is large, which is to be determined later.\\
Let $x=(x',x''),\ x'\in \R^2,\ x''\in \R^{N-2}.$ Define
\begin{equation*}
    \begin{aligned}
        H_s:=&\{ u:u\in D^{1,2}(\RN), u\,  \hbox{is even in}\, x_h, \, h=2,...,N,\\
        &u(r\cos\theta, r\sin\theta, x'')=u\left(r\cos\left(\theta+\frac{2\pi}{k}\right),r\sin \left(\theta+\frac{2\pi}{k}\right),x''\right)\}.
    \end{aligned}
\end{equation*}
Let us choose the centers of the bubbles as the $k$ vertices of a regular $k-$ polygon with radius $r$ inside $\RN$ as:
\begin{equation*}
    \xi_j:=\left( r\cos\frac{2(j-1)\pi}{k}, r\sin\frac{2(j-1)\pi}{k},0\right), \quad j=1,...,k,
\end{equation*}
where $0$ is the zero vector in $\R^{N-2}$ and let 
\begin{equation}\label{eq:bolla}
    \W(x):=\sum_{j=1}^k \Uj(x).
\end{equation}
Choose $\delta>0$ small, such that
$$V(|x|)\geq V_0>0,\quad \quad  |x|\in [r_0-2\delta;r_0+2\delta].$$
We always assume that $r\in [r_0-\delta, r_0+\delta]$ and $$\lambda \in [L_0 k^{\frac{N-2}{N-4}}, L_1 k^{\frac{N-2}{N-4}}]$$
for some constants $L_1>L_0>0$.\\ In order to perform the Lyapunov-Schmidt reduction it is very important to understand the kernel of the linearized operator associated to the limit problem \eqref{cC}. Very recently, in \cite{LLTX}, it has been shown that the solutions of \eqref{cC} are non-degenerate provided $2^\star_\mu\geq 2$ (i.e. $\mu\leq 4$), namely if we look for bounded solution of the linearized equation
\begin{equation}\label{lin}
-\Delta Z= \left[2^\star_\mu \left(\frac{1}{|x|^\mu}* \mathcal U^{2^\star_\mu-1}Z\right)\mathcal U^{2^\star_\mu-1}+(2^\star_\mu-1)\left(\frac{1}{|x|^\mu}*\mathcal U^{2^\star_\mu}\right)\mathcal U^{2^\star_\mu-2}Z\right]\quad\mbox{in}\,\,\,\mathbb R^N
\end{equation}
then $Z\in {\rm span}\langle Z_1, \ldots, Z_N, Z_{N+1}\rangle$ where
$$Z_i:=\frac{\partial\mathcal U}{\partial x_i}, i=1, \ldots, N\quad Z_{N+1}=\frac{N-2}{2}\mathcal U + x\cdot\nabla \mathcal U.$$

The paper is organized as follows. In Section \ref{finite}, we perform the finite-dimensional reduction studying the nonlinear projected problem. In Section \ref{ridotto}, we come to the variational reduction procedure and in Section \ref{teo} we prove Theorem \ref{main}. Finally we give a list of some useful estimates in Section \ref{use}.
\section{The finite dimensional reduction}\label{finite}
In what follows we will carry out the finite-dimensional reduction argument in a weighted space in order to obtain a good estimate for the error term. Let
\begin{equation}\label{normastar}
\|u\|_*=\sup_{x\in\mathbb R^N}\left(\sum_{j=1}^k\frac{1}{(1+\lambda|x-\xi_j|)^{\frac{N-2}{2}+\tau}}\right)^{-1}\lambda^{-\frac{N-2}{2}}|u(x)|\end{equation}
and
\begin{equation}\label{normastarstar}
\|h\|_{**}=\sup_{x\in\mathbb R^N}\left(\sum_{j=1}^k\frac{1}{(1+\lambda|x-\xi_j|)^{\frac{N+2}{2}+\tau}}\right)^{-1}\lambda^{-\frac{N+2}{2}}|h(x)|\end{equation} where $\tau=\frac 12$.\\
We let
$$\mathcal B_{**}:=\left\{h\in L^\infty\,:\, \|h\|_{**}<\infty\right\}.$$

We set $$g(u):=\left(\int_\RN \frac{|u(y)|^{2^\star_\mu}}{|x-y|^\mu}\, dy\right)|u|^{2^\star_\mu -2}u.$$ Moreover we also set
$$g'(u)v=2^\star_\mu\left(\int_\RN \frac{|u(y)|^{2^\star_\mu-2}u(y)v(y)}{|x-y|^\mu}\, dy\right)|u|^{2^\star_\mu -2}u+(2^\star_\mu-1)\left(\int_\RN \frac{|u(y)|^{2^\star_\mu}}{|x-y|^\mu}\, dy\right)|u|^{2^\star_\mu-2}v.$$

 Then we can rewrite the problem \eqref{CE} in the following way 
\begin{equation}\label{CE1}
\left\{\begin{aligned} & -\Delta u+V(|x|)u =g(u)\quad \hbox{in}\, \RN\\ &u>0 \quad \hbox{in}\, \RN\\
\end{aligned}\right.\end{equation}

We look for a solution of the problem \eqref{CE1} of the form 
\beq\label{sol}u_k(x)=\W(x)+\phi_k(x)\eeq where $\phi_k$ is a remainder term that satisfies some suitable orthogonality properties.\\ 
We can rewrite the equation in \eqref{CE1} in the following way
$$\mathcal L(\phi)=\mathcal E + \mathcal N(\phi)$$ where
\begin{equation}\label{L}
\mathcal L(\phi)=-\Delta\phi-g'(\W)\phi+V(|x|)\phi
\end{equation}

\begin{equation}\label{E}
\mathcal E=g(\W)-\sum_{j=1}^k g(\Uj)-V(|x|)\W
\end{equation}
and
\begin{equation}\label{N}
\mathcal N(\phi)=g(\W+\phi)-g(\W)-g'(\W)\phi.
\end{equation}
Let us define for any $j=1, \ldots, k$
\beq\label{psi}
\psi_{j, \lambda, 1}:=\frac{\partial\Uj}{\partial r},\quad \psi_{j, \lambda, 2}:=\frac{\partial \Uj}{\partial \lambda}.\eeq
For later uses we remark that 
\beq\label{DERL}
\psi_{j, \lambda, 2}:=\alpha_{N, \mu}\frac{N-2}{2}\lambda^{\frac{N-4}{2}}\frac{1-\lambda^2|x-\xi_j|^2}{(1+\lambda^2|x-\xi_j|^2)^{\frac N 2}}.\eeq
We consider first the following project problem, i.e.
\begin{equation}\label{linpb}
\left\{\begin{aligned} &\mathcal L(\phi_k)=\mathcal E+\mathcal N(\phi_k)+\sum_{\ell=1}^{2}\mathfrak c_{ \ell}\sum_{j=1}^k g'(\Uj)\psi_{j,\lambda,\ell}\quad \mbox{in}\,\, \mathbb R^N\\
&\phi_k\in H_s\\
&\sum_{j=1}^k\int_\RN g'(\Uj)\psi_{j,\lambda,\ell}\phi_k\, dx=0,\quad \ell=1, 2\end{aligned}\right.\end{equation}
for some real numbers $\mathfrak c_{\ell}$.\\
Then, we need to find $r$ and $\lambda$ so that $\mathfrak c_\ell=0$ in order to have a solution of the problem.\\
\subsection{The linear theory}

Here we consider the linear problem 
\begin{equation}\label{linpbh}
\left\{\begin{aligned} &\mathcal L(\phi_k)=h+\sum_{\ell=1}^{2}\mathfrak c_{ \ell}\sum_{j=1}^k g'(\Uj)\psi_{j,\lambda,\ell}\quad \mbox{in}\,\, \mathbb R^N\\
&\phi_k\in H_s\\
&\sum_{j=1}^k\int_\RN g'(\Uj)\psi_{j,\lambda,\ell}\phi_k \, dx=0,\quad \ell=1, 2\end{aligned}\right.\end{equation}
Reasonig as in Lemma 3.1 and Lemma 3.2 of \cite{GMVY} one can show the following invertibility result.
\begin{lemma}\label{inv}
There exists $k_0>0$ and a constant $C>0$ independent of $k$, such that for all $k\geq k_0$ and all $h\in L^\infty(\RN)$, problem \eqref{linpbh} has a unique solution $\phi$. Besides, 
\beq\label{esterr}
\|\phi\|_*\leq C \|h\|_{**}, \quad |\mathfrak c_\ell| \leq \frac{C}{\lambda^{n_\ell}}\|h\|_{**}\eeq
where $n_1:=-1$ and $n_2:=1$.\end{lemma}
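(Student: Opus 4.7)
The plan is to follow the standard blow-up/contradiction scheme that underlies invertibility results of this type in the Lyapunov–Schmidt reduction, adapted to the weighted norms $\|\cdot\|_*$ and $\|\cdot\|_{**}$ and to the nonlocal operator $g'(\W)$. The two main ingredients are: (i) a representation-formula estimate that converts the PDE $\mathcal L(\phi)=h+\text{multipliers}$ into a pointwise bound in the $\|\cdot\|_*$ norm, and (ii) the non-degeneracy of $\Ud$ established in \cite{LLTX}, which will be used to kill the blow-up limits.

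First I would establish existence and uniqueness by a Fredholm argument in the Hilbert space $H_s$ with the orthogonality constraints $\sum_j\int g'(\Uj)\psi_{j,\lambda,\ell}\phi\,dx=0$: the operator $\phi\mapsto -\Delta\phi+V\phi$ is coercive, while $\phi\mapsto g'(\W)\phi$ is compact on the constrained subspace by Hardy–Littlewood–Sobolev, so it is enough to prove an a priori bound of the form $\|\phi\|_*\le C\|h\|_{**}$. For the a priori estimate I would argue by contradiction: suppose there exist $k_n\to\infty$, solutions $\phi_n\in H_s$, data $h_n$ with $\|h_n\|_{**}\to 0$, and multipliers $\mathfrak c_{\ell,n}$, such that $\|\phi_n\|_*=1$. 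Using the Green's function of $-\Delta+V$ in $\RN$ (whose decay is comparable with $|x-y|^{2-N}$ for $N\ge 5$) and the elementary convolution estimate
\begin{equation*}
\int_\RN\frac{1}{|x-y|^{N-2}}\sum_{j=1}^k\frac{dy}{(1+\lambda|y-\xi_j|)^{\frac{N+2}{2}+\tau}}\le C\lambda^{-2}\sum_{j=1}^k\frac{1}{(1+\lambda|x-\xi_j|)^{\frac{N-2}{2}+\tau}},
\end{equation*}
which holds with $\tau=\tfrac12$ by a careful splitting into near/far regions around each $\xi_j$, I would obtain
\begin{equation*}
\|\phi_n\|_*\le C\bigl(\|g'(\W)\phi_n\|_{**}+\|h_n\|_{**}\bigr)+C\sum_{\ell=1}^2|\mathfrak c_{\ell,n}|\Bigl\|\sum_{j=1}^k g'(\Uj)\psi_{j,\lambda,\ell}\Bigr\|_{**}.
\end{equation*}
The nonlocal term $g'(\W)\phi$ requires the most care: one controls the two convolution pieces in the definition of $g'$ separately, using Hardy–Littlewood–Sobolev together with the pointwise decay of $\W$ and the weighted bound on $\phi$, and shows that $\|g'(\W)\phi\|_{**}$ is bounded by a "localized" version of $\|\phi\|_*$ that concentrates near the bubbles.

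The contradiction is then produced by a local blow-up. Define $\tilde\phi_n(y):=\lambda_n^{-(N-2)/2}\phi_n(\xi_{1,n}+y/\lambda_n)$; by $\|\phi_n\|_*=1$ the functions $\tilde\phi_n$ are uniformly bounded on compact sets, and by standard elliptic estimates they converge locally uniformly to a bounded solution $\tilde\phi$ of the linearized limit equation \eqref{lin} around $\mathcal U_{1,0}$. The non-degeneracy result of \cite{LLTX} forces $\tilde\phi\in\mathrm{span}\langle Z_1,\dots,Z_N,Z_{N+1}\rangle$; the $H_s$-symmetry rules out the translations $Z_i$ for $i\ne 1$ and $Z_N$, while the two orthogonality conditions against $\psi_{j,\lambda,1}$ and $\psi_{j,\lambda,2}$ pass to the limit and kill the remaining two directions (the radial-translation and dilation modes), so $\tilde\phi\equiv 0$. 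Combined with the weighted a priori inequality above and the fact that the "far from the bubbles" contribution goes to zero, this forces $\|\phi_n\|_*\to 0$, contradicting $\|\phi_n\|_*=1$.

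Finally, to bound the multipliers and obtain the sharper $|\mathfrak c_\ell|\le C\lambda^{-n_\ell}\|h\|_{**}$, I would test \eqref{linpbh} against $\psi_{i,\lambda,m}$, integrate by parts, and solve the resulting $2\times 2$ linear system for $(\mathfrak c_1,\mathfrak c_2)$. Using that $\int g'(\Uj)\psi_{j,\lambda,\ell}\psi_{i,\lambda,m}$ is of order $\lambda^{2(n_\ell-1)}\delta_{\ell m}+$ lower order interaction terms (from standard bubble computations, consistent with $n_1=-1$, $n_2=1$), and that the right-hand side contributions $\int h\,\psi_{i,\lambda,m}$ and $\int(\mathcal L(\phi)-h)\psi_{i,\lambda,m}$ are controlled by $\lambda^{-n_\ell}\|h\|_{**}$ plus a small correction involving $\|\phi\|_*$ already bounded by $\|h\|_{**}$, one reads off the claimed weight $\lambda^{-n_\ell}$.

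The main obstacle I anticipate is the first ingredient, namely producing the correct weighted Green's-function estimate in the presence of the nonlocal term $g'(\W)\phi$. The two summands in $g'(\W)$ involve convolutions of $\phi$ against $\W^{2^\star_\mu-1}|x|^{-\mu}$, and pushing these through the weighted norm without losing the $\lambda^{-2}$ smallness requires a careful partition of $\RN$ into the $k$ annular regions around the $\xi_j$'s and a symmetry-based bound on the mutual interaction $\sum_{i\ne j}(1+\lambda|x-\xi_i|)^{-a}(1+\lambda|x-\xi_j|)^{-b}$ using that the $\xi_j$ are vertices of a regular $k$-polygon. Once this is in place, everything else is a more-or-less routine adaptation of the arguments of \cite{GMVY}.
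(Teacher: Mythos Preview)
Your outline is correct and is exactly the approach the paper invokes: the authors do not give a self-contained proof but simply refer to Lemmas~3.1--3.2 of \cite{GMVY}, which carry out precisely the blow-up/contradiction argument with weighted Green's-function estimates, the non-degeneracy result of \cite{LLTX}, and the testing procedure for the multipliers that you describe. One small slip: the diagonal Gram entries $\sum_j\int g'(\Uj)\psi_{j,\lambda,\ell}^2$ scale like $\lambda^{-2n_\ell}$ (i.e.\ $\lambda^{2}$ for $\ell=1$ and $\lambda^{-2}$ for $\ell=2$), not $\lambda^{2(n_\ell-1)}$, but this does not affect the argument.
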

In order to use contraction mapping theorem to prove that \eqref{linpb} is uniquely solvable in the set where $\|\phi\|_*$ is small, we need to estimates $\mathcal N(\phi)$ and $\mathcal E$.
\subsection{Estimate of the error term}
Before to estimate the error term we first introduce for every $j=1, \ldots, k$ the sets $$\Omega_j:=\left\{x=(x', x'')\in \mathbb R^2\times \mathbb R^{N-2}\,\,: \left\langle \frac{x'}{|x'|}, \frac{\xi_j'}{|\xi_j'|}\right\rangle \geq \cos\frac \pi k\right\}.$$ 

\begin{lemma}\label{stimaerrore}
    If $N\geq 5,$ then there exists a small $\varepsilon>0$ such that
$$ \|\mathcal E\|_{**}\lesssim \left (\frac{1}{\lambda}\right)^{1+\varepsilon}.
$$
\end{lemma}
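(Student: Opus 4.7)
By the $H_s$-symmetry (equivariance under rotations by $2\pi/k$ and reflections in the last $N-2$ coordinates), both $\mathcal{E}$ and the target weight $\sum_j(1+\lambda|x-\xi_j|)^{-(N+2)/2-\tau}$ are invariant under the action of the group, so it suffices to establish the required pointwise bound for $x\in\Omega_1$. Within $\Omega_1$ I would decompose
\begin{equation*}
\mathcal{E}=\bigl[g(\W)-\textstyle\sum_{j=1}^k g(\Uj)\bigr]-V(|x|)\W=:\mathcal{E}^{(1)}+\mathcal{E}^{(2)},
\end{equation*}
and bound the nonlocal interaction piece $\mathcal{E}^{(1)}$ and the potential piece $\mathcal{E}^{(2)}$ separately.

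For $\mathcal{E}^{(1)}$, writing $A_j:=|x|^{-\mu}*\Uj^{2^\star_\mu}$ and expanding via the algebraic identity
\begin{equation*}
g(\W)-\sum_j g(\Uj)=\sum_{i\ne j}A_i\,\Uj^{2^\star_\mu-1}+\Bigl(|x|^{-\mu}*\bigl(\W^{2^\star_\mu}-\sum_j\Uj^{2^\star_\mu}\bigr)\Bigr)\W^{2^\star_\mu-1}+\Bigl(\sum_j A_j\Bigr)\Bigl(\W^{2^\star_\mu-1}-\sum_j\Uj^{2^\star_\mu-1}\Bigr),
\end{equation*}
I would estimate each piece in $\Omega_1$ using: (i) the standard pointwise bound $A_j(x)\lesssim \lambda^{(N-\mu)/2}(1+\lambda|x-\xi_j|)^{-(N-\mu)}$ on the Riesz potential of a bubble; (ii) the super-additivity exponent inequalities $|\W^p-\sum_j\Uj^p|\lesssim \sum_{i\ne j}\Ui^{p-1}\Uj$ valid because $2^\star_\mu-1\ge 1$ for $\mu\le 4$; and (iii) the interaction estimates between different bubbles that should already be contained in the "useful estimates" section, relying on $|\xi_i-\xi_j|\gtrsim r_0|i-j|/k$. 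The gain of a factor $\lambda^{-1-\varepsilon}$ comes from the combination $\lambda|\xi_i-\xi_j|\gtrsim k^{2/(N-4)}$, which, matched against the ansatz $\lambda\in[L_0k^{(N-2)/(N-4)},L_1k^{(N-2)/(N-4)}]$, yields inverse powers of $k$ that translate into inverse powers of $\lambda$.

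For $\mathcal{E}^{(2)}=-V(|x|)\W$, I would use $V\in L^\infty$ together with the fact that in $\Omega_1$ the bubble $\U$ dominates, and compare
\begin{equation*}
V(|x|)\Uj(x)\lesssim \frac{\lambda^{(N-2)/2}}{(1+\lambda|x-\xi_j|)^{N-2}}
\end{equation*}
to the target weight $\lambda^{(N+2)/2}(1+\lambda|x-\xi_j|)^{-(N+2)/2-\tau}$. With $\tau=1/2$, direct comparison yields the power $\lambda^{(6-N)/2}(1+\lambda|x-\xi_j|)^{(7-N)/2}$, which is already $\lesssim\lambda^{-1-\varepsilon}$ for $N\ge 8$. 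For $N=5,6,7$, where the polynomial weight works against us, I would further split $\Omega_1$ into the near-region $\{|x-\xi_1|\le d_k\}$ and the far-region, choosing $d_k\sim r_0/k$ (comparable to the distance to the boundary of $\Omega_1$). In the near region one replaces $V(|x|)$ by $V(r_0)$ plus a controlled remainder and uses the constraint that $r\in[r_0-\delta,r_0+\delta]$; in the far region one uses the decay of the bubble to absorb the extra factor of $(1+\lambda|x-\xi_1|)^{(7-N)/2}$ and gains additional powers of $k^{-1}$, which via the scaling of $\lambda$ yield the required $\lambda^{-\varepsilon}$ gain.

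\textbf{Main obstacle.} The principal difficulty is the nonlocal cross-interaction term $\sum_{i\ne j}A_i\Uj^{2^\star_\mu-1}$, because the Riesz potential $A_i$ of a concentrated bubble located at $\xi_i$ is \emph{not} concentrated at $\xi_i$ (it decays only polynomially), so it couples to every other bubble in a nontrivial way. The key quantitative input must be a delicate estimate of the form
\begin{equation*}
\sum_{i\ne j}\int_{\mathbb{R}^N}\frac{\Ui^{2^\star_\mu}(y)}{|x-y|^\mu}\,dy\cdot\Uj^{2^\star_\mu-1}(x)\lesssim \lambda^{-1-\varepsilon}\,\lambda^{(N+2)/2}\sum_{j=1}^k\frac{1}{(1+\lambda|x-\xi_j|)^{(N+3)/2}},
\end{equation*}
which in turn requires splitting $\mathbb{R}^N$ in the convolution according to whether $y$ is near $\xi_i$, near $\xi_j$, or far from both, mirroring (but complicated by the nonlocal kernel) the strategy in \cite{GMVY}; this is where the assumption $\mu\le 4$ (equivalently $2^\star_\mu\ge 2$) enters critically to ensure the algebraic inequalities and the summability over $j=2,\ldots,k$ of the resulting distance powers.
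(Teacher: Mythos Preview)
Your overall strategy—restrict to $\Omega_1$ by symmetry, split $\mathcal E$ into the potential piece and the nonlocal interaction piece, then expand the latter into cross terms—coincides with the paper's. Two concrete points, however, go wrong.

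\textbf{The Riesz potential of a bubble.} Your ``standard pointwise bound'' $A_j(x)\lesssim\lambda^{(N-\mu)/2}(1+\lambda|x-\xi_j|)^{-(N-\mu)}$ is incorrect. The explicit identity in the paper (Remark~\ref{R1}) gives
\[
A_j(x)=\bigl(|x|^{-\mu}*\Uj^{2^\star_\mu}\bigr)(x)=\mathfrak d_{N,\mu}\Bigl(\frac{\lambda}{1+\lambda^2|x-\xi_j|^2}\Bigr)^{\mu/2}\sim\frac{\lambda^{\mu/2}}{(1+\lambda|x-\xi_j|)^{\mu}},
\]
so the decay exponent is $\mu$, not $N-\mu$: the convolution inherits the slow tail of the kernel $|y|^{-\mu}$, not the fast decay of the bubble. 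With your exponent every subsequent interaction estimate is miscalibrated. In particular, once the correct decay is in hand, the term you flag as the ``main obstacle''—the cross term $\sum_{i\neq j}A_i\,\Uj^{2^\star_\mu-1}$, called $(B)$ in the paper—is dispatched in one line by the same device as the second half of $(A)$. The real work lies in the piece $(A)$, namely controlling $|x|^{-\mu}*\bigl(\W^{2^\star_\mu}-\sum_j\Uj^{2^\star_\mu}\bigr)$; the paper does this not by splitting the $y$–integral case by case but by first applying the interaction Lemma~\ref{lemmaB1} to extract a factor $(\lambda|\xi_i-\xi_j|)^{-\beta}$ and then invoking the convolution Lemma~\ref{stima2} as a black box. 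The choice $\beta=\tfrac{N-2}{2}(1+\varepsilon)$ converts $(k/\lambda)^\beta$ into $\lambda^{-1-\varepsilon}$ via the scaling $\lambda\sim k^{(N-2)/(N-4)}$.

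\textbf{The potential piece.} The paper handles $V\W$ by a direct pointwise comparison: one sacrifices a factor $\lambda^{1-\varepsilon}$ from the available $\lambda^{-2}$ and uses the excess decay of $(1+\lambda|x-\xi_j|)^{-(N-2)}$ over the weight $(1+\lambda|x-\xi_j|)^{-(N+2)/2-\tau}$; no near/far split appears. Your proposed remedy for small $N$—replacing $V(|x|)$ by $V(r_0)$ in a ball around $\xi_1$—buys nothing: since $V(r_0)>0$, the near-region contribution is unchanged in size, and the difficulty concerns only the mismatch of decay exponents, not the value of $V$. Your far-region claim is likewise unjustified as written: there $(1+\lambda|x-\xi_1|)$ is unbounded, so the bad factor $(1+\lambda|x-\xi_1|)^{(7-N)/2}$ (for $N\le 6$) cannot be absorbed by any fixed power of $k^{-1}$.
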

\begin{proof}

We have that
$$\mathcal E:=\left[g(\W)-\sum_{j=1}^k g(\Uj)\right]-V(|x|)\W:=J_1-J_2.$$ We start by estimate $J_2$. 
We have that $$\begin{aligned}|J_2|&\lesssim \sum_{j=1}^k \frac{\lambda^{\frac{N-2}{2}}}{(1+\lambda|x-\xi_j|)^{N-2}}\lesssim \lambda^{\frac{N+2}{2}}\sum_{j=1}^k\frac{1}{\lambda^2(1+\lambda|x-\xi_j|)^{N-2}}\\ &\lesssim \left(\frac{1}{\lambda}\right)^{1+\varepsilon} \sum_{j=1}^k\frac{\lambda^{\frac{N+2}{2}}}{(1+\lambda|x-\xi_j|)^{\frac{N+2}{2}+\tau}}\end{aligned}$$ since $N-1-\varepsilon>\frac{N+2}{2}+\tau$ if $\varepsilon>0$ is small. Hence
$$\|J_2\|_{**}\lesssim \left(\frac{1}{\lambda}\right)^{1+\varepsilon}.$$
Now 
$$\begin{aligned} J_1&=\left[|x|^{-\mu}* \left(\sum_{j=1}^k \Uj\right)^{2^\star_\mu}\right]\cdot\left(\sum_{j=1}^k \Uj\right)^{2^\star_\mu-1}-\sum_{j=1}^k \left(|x|^{-\mu}* (\Uj)^{2^\star_\mu}\right)(\Uj)^{2^\star_\mu-1}\\
&=\underbrace{\left[|x|^{-\mu}* \left(\left(\sum_{j=1}^k \Uj\right)^{2^\star_\mu}-\sum_{j=1}^k (\Uj)^{2^\star_\mu}\right)\right]\cdot\left(\sum_{j=1}^k \Uj\right)^{2^\star_\mu-1}}_{(A)}\\
&+\underbrace{\sum_{i\neq j}^k \left(|x|^{-\mu}* (\Uj)^{2^\star_\mu}\right)(\Ui)^{2^\star_\mu-1}}_{(B)}.\end{aligned}$$
We have that for any $x\in\Omega_j$   $$ \left|\left(\sum_{j=1}^k \Uj\right)^{2^\star_\mu}-\sum_{j=1}^k (\Uj)^{2^\star_\mu}\right|\lesssim (\Uj)^{2^\star_\mu-1} \sum_{i\neq j}\Ui +\sum_{i\neq j}(\Ui)^{2^\star_\mu}.$$
Now, by using Lemma \ref{lemmaB1} and by taking $0<\beta\leq \min\{N-\mu+2, N-2\}=N-2$ (we recall that $0<\mu\leq 4$) we get that for any $x\in\Omega_j$ and $i\neq j$
$$\begin{aligned} (\Uj)^{2^\star_\mu-1}\Ui &\lesssim\frac{\lambda^{\frac{N-\mu+2}{2}}}{(1+\lambda|x-\xi_j|)^{N-\mu+2}}\frac{\lambda^{\frac{N-2}{2}}}{(1+\lambda|x-\xi_i|)^{N-2}}\\
&\lesssim\frac{1}{\lambda^\beta|\xi_i-\xi_j|^\beta}\frac{\lambda^{N-\frac\mu 2}}{(1+\lambda|x-\xi_j|)^{2N-\mu-\beta}}\end{aligned}$$ and hence
$$(\Uj)^{2^\star_\mu-1} \sum_{i\neq j}\Ui \lesssim \left(\frac k \lambda\right)^\beta \frac{\lambda^{N-\frac\mu 2}}{(1+\lambda|x-\xi_j|^{2N-\mu-\beta}}.$$ Now by Remark \ref{R2} and Lemma \ref{stima2} with $\theta:=N-\frac\mu 2$ and $\gamma:=N-\beta+(N-\mu)$ 
$$\begin{aligned} |x|^{-\mu}* (\Uj)^{2^\star_\mu-1} \sum_{i\neq j}\Ui&\lesssim \left(\frac k\lambda\right)^\beta \lambda^{\frac \mu 2}\int_{\RN}\frac{1}{|y|^\mu}\frac{1}{(1+|\lambda(x-\xi_j)-y|)^{2N-\mu+\beta}}\\
&\lesssim \left(\frac k\lambda\right)^\beta \frac{\lambda^{\frac \mu 2}}{(1+|\lambda(x-\xi_j)|)^{N-\beta-\eta}}\end{aligned}$$ for $\eta>0$ small.
Moreover by using Remark \ref{R1} we have that
$$ |x|^{-\mu}* \sum_{i\neq j}(\Ui)^{2^\star_\mu}\lesssim \sum_{i\neq j} \frac{\lambda^{\frac \mu 2}}{(1+\lambda|x-\xi_i|)^\mu}.$$ Moreover for any $x\in\Omega_j$ and $i\neq j$ we get for $0<\delta<\frac\mu 2$ and by using Lemma \ref{lemmaB1}
$$\begin{aligned} \sum_{i\neq j}\frac{1}{(1+\lambda|x-\xi_i|)^\mu}&\lesssim \sum_{i\neq j}\frac{1}{(1+\lambda|x-\xi_i|)^{\frac \mu 2}}\frac{1}{(1+\lambda|x-\xi_j|)^{\frac \mu 2}}\\
&\lesssim \sum_{i\neq j}\left(\frac{1}{\lambda^\delta|\xi_i-\xi_j|^\delta}\right)\frac{1}{(1+\lambda|x-\xi_j|)^{\mu-\delta}}\\
&\lesssim \left(\frac k \lambda\right)^\delta \frac{1}{(1+\lambda|x-\xi_j|)^{\mu-\delta}}\end{aligned}$$
Hence $$\begin{aligned} |(A)|&\lesssim \left(\frac k \lambda\right)^\beta \sum_{j=1}^k\frac{\lambda^{\frac \mu 2}}{(1+|\lambda(x-\xi_j)|)^{N-\beta-\eta}}\sum_{h=1}^k (\Uh)^{2^\star_\mu-1}+\left(\frac k \lambda\right)^\delta \sum_{j=1}^k \frac{\lambda^{\frac \mu 2}}{(1+\lambda|x-\xi_i|)^{\mu-\delta}}\sum_{h=1}^k (\Uh)^{2^\star_\mu-1}\\
&\lesssim \left(\frac k \lambda\right)^\beta \lambda^{\frac{N+2}{2}}\sum_{j=1}^k \frac{1}{(1+|\lambda(x-\xi_j)|)^{N-\beta-\eta}}\sum_{h=1}^k\frac{1}{(1+|\lambda(x-\xi_h)|)^{N-\mu+2}}\\
&+\left(\frac k \lambda\right)^\delta \lambda^{\frac{N+2}{2}}\sum_{j=1}^k \frac{1}{(1+\lambda|x-\xi_i|)^{\mu-\delta}}\sum_{h=1}^k\frac{1}{(1+|\lambda(x-\xi_h)|)^{N-\mu+2}}\\
&\lesssim \left(\frac 1 \lambda\right)^{1+\varepsilon}\lambda^{\frac{N+2}{2}}\sum_{j=1}^k \frac{1}{(1+|\lambda(x-\xi_h)|)^{\frac{N+2}{2}+\tau}}.\end{aligned}$$ Indeed, by choosing $\beta:=\frac{N-2}{2}(1+\varepsilon)$ and recalling that $\lambda \sim k^{\frac{N-2}{N-4}}$ we have that $$\left(\frac k \lambda\right)^\beta \sim \left(\frac 1 \lambda\right)^{\frac{2}{N-2}\beta}\sim \left(\frac 1 \lambda\right)^{1+\varepsilon}.$$ Moreover $$N-\beta-\eta+N-\mu+2:=\frac{N+2}{2}+\left(N-\mu +2-\eta-\frac{N-2}{2}\varepsilon\right)>\frac{N+2}{2}+\tau$$ if $\varepsilon>0$ is small enough. Moreover we get also $\delta:=\frac{2}{N-2}(1+\varepsilon)$. Hence again $\left(\frac k \lambda\right)^\delta \sim \left(\frac 1 \lambda\right)^{1+\varepsilon}.$ Moreover $$N+2-\delta:=\frac{N+2}{2}+\tau+ \left(2-\frac{N-2}{2}\varepsilon-\tau\right)>\frac{N+2}{2}+\tau$$ if $\varepsilon>0$ is small enough. 
Hence $$\|(A)\|_{**}\lesssim \left(\frac 1 \lambda\right)^{1+\varepsilon}.$$ For the term (B) we reason as for the second term in (A) and hence we get also that $\|(B)\|_{**}\lesssim \left(\frac 1 \lambda\right)^{1+\varepsilon}.$ Finally $$\|J_1\|_{**}\lesssim \left(\frac 1 \lambda\right)^{1+\varepsilon}.$$
\end{proof}
\subsection{Estimate of the nonlinear term}
\begin{lemma}\label{stimaN}
    If $N\geq 5,$ then
$$ \|\mathcal N(\phi)\|_{**}\lesssim C\|\phi\|_*^{\min\{2, 2^\star_\mu-1\}}.$$\end{lemma}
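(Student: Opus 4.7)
The proof follows the same philosophy as Lemma \ref{stimaerrore}, but now one must expand the Choquard nonlinearity $g$ to second order around $\W$ rather than merely comparing $g(\W)$ with $\sum g(\Uj)$.

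First I would exploit the product structure $g(u)=F(u)H(u)$ with $F(u):=|x|^{-\mu}\ast|u|^{2^\star_\mu}$ and $H(u):=|u|^{2^\star_\mu-2}u$ to split
\begin{align*}
\mathcal N(\phi)
 &=\Bigl[|x|^{-\mu}\ast\bigl(|\W+\phi|^{2^\star_\mu}-\W^{2^\star_\mu}-2^\star_\mu\W^{2^\star_\mu-1}\phi\bigr)\Bigr]\,|\W+\phi|^{2^\star_\mu-2}(\W+\phi)\\
 &\quad+2^\star_\mu\bigl[|x|^{-\mu}\ast(\W^{2^\star_\mu-1}\phi)\bigr]\bigl(|\W+\phi|^{2^\star_\mu-2}(\W+\phi)-\W^{2^\star_\mu-1}\bigr)\\
 &\quad+\bigl(|x|^{-\mu}\ast\W^{2^\star_\mu}\bigr)\bigl(|\W+\phi|^{2^\star_\mu-2}(\W+\phi)-\W^{2^\star_\mu-1}-(2^\star_\mu-1)\W^{2^\star_\mu-2}\phi\bigr)=:I_1+I_2+I_3,
\end{align*}
so that each piece isolates the Taylor remainder of one of the two nonlinear factors of $g$.

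Next I would estimate each factor pointwise via elementary inequalities of the type
$\bigl||a+b|^{p}-|a|^{p}-p|a|^{p-2}a\,b\bigr|\lesssim |a|^{p-2}b^{2}+|b|^{p}$ when $p\geq 2$, and $\lesssim|b|^{p}$ when $1<p<2$, applied to $p=2^\star_\mu$ (always $\geq 2$) and $p=2^\star_\mu-1$ (which is $\geq 2$ only when $2^\star_\mu\geq 3$; this is exactly where the $\min\{2,2^\star_\mu-1\}$ appears). Combined with the pointwise bounds
\[
|\phi(x)|\leq \|\phi\|_*\,\lambda^{\frac{N-2}{2}}\sum_{j=1}^{k}\frac{1}{(1+\lambda|x-\xi_j|)^{\frac{N-2}{2}+\tau}},\qquad \W(x)\lesssim \lambda^{\frac{N-2}{2}}\sum_{j=1}^{k}\frac{1}{(1+\lambda|x-\xi_j|)^{N-2}},
\]
the remainders $I_1,I_2,I_3$ reduce to finite sums of products of inverse polynomial bubble weights multiplied by $\|\phi\|_*^{\min\{2,2^\star_\mu-1\}}$.

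Finally I would absorb the convolution with $|x|^{-\mu}$ in $I_1$ and $I_2$ exactly as in the proof of Lemma \ref{stimaerrore}: applying Remarks \ref{R1}--\ref{R2} and Lemma \ref{stima2} with appropriate choices of exponents, every convolution of a bubble-type decay factor produces a decay function of order $(1+\lambda|x-\xi_j|)^{-\gamma}$ with $\gamma$ large enough to allow the subsequent multiplication by $|\W+\phi|^{2^\star_\mu-2}(\W+\phi)$ or $\W^{2^\star_\mu-2}\phi$ to land inside the target class
\[
\lambda^{\frac{N+2}{2}}\sum_{j=1}^{k}\frac{1}{(1+\lambda|x-\xi_j|)^{\frac{N+2}{2}+\tau}}\,.
\]
Cross terms between different bubbles are handled by the standard inequality used for Lemma \ref{stimaerrore}, $(1+\lambda|x-\xi_i|)^{-a}(1+\lambda|x-\xi_j|)^{-b}\lesssim (\lambda|\xi_i-\xi_j|)^{-\delta}(1+\lambda|x-\xi_j|)^{-a-b+\delta}$ (Lemma \ref{lemmaB1}), together with $\sum_{i\neq j}|\xi_i-\xi_j|^{-\delta}\lesssim(k/\lambda)^{\delta}$, which is harmless here since no extra smallness in $k/\lambda$ is required. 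Collecting all the bounds produces
$\|\mathcal N(\phi)\|_{**}\lesssim \|\phi\|_*^{\min\{2,2^\star_\mu-1\}}$.

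The main technical obstacle is exactly the same as in Lemma \ref{stimaerrore}: ensuring that after Taylor expansion the bubble-sum weights that come from $\phi$ and from $\W$ can be recombined, through the convolution by $|x|^{-\mu}$, into a single sum $\sum_j(1+\lambda|x-\xi_j|)^{-(N+2)/2-\tau}$ with the correct total decay. The only new feature compared to Lemma \ref{stimaerrore} is the branching according to whether $2^\star_\mu-1\geq 2$, which forces the use of the sub-quadratic Taylor inequality in the critical range $2\leq 2^\star_\mu<3$ and is responsible for the appearance of $\min\{2,2^\star_\mu-1\}$.
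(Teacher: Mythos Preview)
Your proposal is correct and follows essentially the same approach as the paper: the same three-term decomposition $I_1+I_2+I_3$, the same pointwise Taylor inequalities (the paper's Lemma \ref{d1}), and the same convolution estimates via Remarks \ref{R1}--\ref{R2} and Lemma \ref{stima2} to reduce every piece to the target weight $\lambda^{(N+2)/2}\sum_j(1+\lambda|x-\xi_j|)^{-(N+2)/2-\tau}$. The only minor difference is that the paper does not invoke Lemma \ref{lemmaB1} or any $(k/\lambda)^\delta$ summation here: since no extra smallness in $k/\lambda$ is needed, the paper simply bounds products like $\W^{2^\star_\mu-2}\phi^2$ directly by $\|\phi\|_*^2\,\lambda^{N-\mu/2}\sum_j(1+\lambda|x-\xi_j|)^{-(N-\mu+2+2\tau)}$ and proceeds, so your cross-term step is an unnecessary (though harmless) detour.
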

\begin{proof}
We remark that 
$$\begin{aligned}\mathcal N(\phi)&= \left(|x|^{-\mu}* \left|\W+\phi\right|^{2^\star_\mu}\right)\left|\W+\phi\right|^{2^\star_\mu-2}\left(\W+\phi\right)-\left(|x|^{-\mu}* \W^{2^\star_\mu}\right)\W^{2^\star_\mu-1}\\&-2^\star_\mu\left(|x|^{-\mu}*\W^{2^\star_\mu-1}\phi\right)\W^{2^\star_\mu-1}-(2^\star_\mu-1)\left(|x|^{-\mu}*\W^{2^\star_\mu}\right)\W^{2^\star_\mu-2}\phi\\
&=\left(|x|^{-\mu}*\left(\left|\W+\phi\right|^{2^\star_\mu}-\W^{2^\star_\mu}-2^\star_\mu\W^{2^\star_\mu-1}\phi\right)\right)\left|\W+\phi\right|^{2^\star_\mu-2}\left(\W+\phi\right)\\
&+\left(|x|^{-\mu}* \W^{2^\star_\mu}\right)\left(\left|\W+\phi\right|^{2^\star_\mu-2}\left(\W+\phi\right)-\W^{2^\star_\mu-1}-(2^\star_\mu-1)\W^{2^\star_\mu-2}\phi\right)\\
&+2^\star_\mu \left(|x|^{-\mu}*\W^{2^\star_\mu-1}|\phi|\right)\left(\left|\W+\phi\right|^{2^\star_\mu-2}\left(\W+\phi\right)-\W^{2^\star_\mu-1}\right)\end{aligned}$$
Then by using Lemma \ref{d1} we get $$\begin{aligned}|\mathcal N(\phi)|&\lesssim  \left(|x|^{-\mu}* \W^{2^\star_\mu-2}\phi^2\right)\left(\W^{2^\star_\mu-1}+|\phi|^{2^\star_\mu-1}\right)+ \left(|x|^{-\mu}* |\phi|^{2^\star_\mu}\right)\left(\W^{2^\star_\mu-1}+|\phi|^{2^\star_\mu-1}\right)\\
&+\left(|x|^{-\mu}* \W^{2^\star_\mu}\right)\left\{\begin{aligned}&\left(\W^{2^\star_\mu-3}|\phi|^2 +|\phi|^{2^\star_\mu-1}\right)\quad&\mbox{if}\quad N=5\,\, \mbox{and}\,\,\ 0<\mu<1\\
& |\phi|^{2^\star_\mu-1}\quad&\mbox{if}\quad N=5\,\, \mbox{and}\,\,\ 1\leq \mu\leq 4\,\, \mbox{or}\,\, N\geq 6\end{aligned}\right.\\
&+\left(|x|^{-\mu}*\W^{2^\star_\mu-1}|\phi|\right)\left( |\phi|^{2^\star_\mu-1}+\W^{2^\star_\mu-2}|\phi|\right).\end{aligned}
$$
We have by using Remark \ref{R2} and Lemma \ref{stima2} that for any $\sigma\in(0, 2+2\tau)$
$$\begin{aligned}|x|^{-\mu}* \W^{2^\star_\mu-2}\phi^2&\lesssim \|\phi\|^2_* \lambda^{N-\frac\mu 2}\left(|x|^{-\mu}*\sum_{j=1}^k \frac{1}{\left(1+\lambda|x-\xi_j|\right)^{N+2+2\tau-\mu}}\right)\\
&\lesssim \|\phi\|^2_* \lambda^{\frac\mu 2}\sum_{j=1}^k \frac{1}{\left(1+\lambda|x-\xi_j|\right)^{2+2\tau-\sigma}}\\\end{aligned}$$
Hence
$$\begin{aligned}\left(|x|^{-\mu}* \W^{2^\star_\mu-2}\phi^2\right)\left(\W^{2^\star_\mu-1}+|\phi|^{2^\star_\mu-1}\right)&\lesssim \|\phi\|_*^2\lambda^{\frac{N+2}{2}}\sum_{j=1}^k \frac{1}{\left(1+\lambda|x-\xi_j|\right)^{\frac{N+2}{2}+\tau+(\frac N 2 + 3+\tau-\mu-\sigma)}}\\
&+\|\phi\|^{2^\star_\mu+1}_*\lambda^{\frac{N+2}{2}}\sum_{j=1}^k \frac{1}{\left(1+\lambda|x-\xi_j|\right)^{\frac{N+2}{2}+\tau+( 2+2^\star_\mu\tau-\frac\mu 2-\sigma)}}\\
&\lesssim \|\phi\|^{2}_*\lambda^{\frac{N+2}{2}}\sum_{j=1}^k \frac{1}{\left(1+\lambda|x-\xi_j|\right)^{\frac{N+2}{2}+\tau}}\end{aligned}$$ since $\frac N 2 + 3+\tau-\mu-\sigma>0$ and $2+2^\star_\mu\tau-\frac\mu 2-\sigma>0$ provided $\sigma$ sufficiently small (recall that $0<\mu\leq 4$). \\ Moreover, again by Remark \ref{R2} and by Lemma \ref{stima2} for any $\sigma\in\left(0, \frac\mu 2+2^\star_\mu\tau\right)$
$$\begin{aligned}|x|^{-\mu}*|\phi|^{2^\star_\mu}&\lesssim \|\phi\|_*^{2^\star_\mu}\lambda^{N-\frac\mu 2}\left(|x|^{-\mu}*\sum_{j=1}^k \frac{1}{\left(1+\lambda|x-\xi_j|\right)^{N-\frac \mu 2+2^\star_\mu\tau}}\right)\\
&\lesssim \|\phi\|_*^{2^\star_\mu}\lambda^{\frac \mu 2}\sum_{j=1}^k \frac{1}{\left(1+\lambda|x-\xi_j|\right)^{\frac \mu 2+2^\star_\mu\tau-\sigma}} \end{aligned}$$
and hence 
$$\begin{aligned}\left(|x|^{-\mu}* |\phi|^{2^\star_\mu}\right)\left(\W^{2^\star_\mu-1}+|\phi|^{2^\star_\mu-1}\right)&\lesssim \|\phi\|_*^{2^\star_\mu}\lambda^{\frac{N+2}{2}}\sum_{j=1}^k \frac{1}{\left(1+\lambda|x-\xi_j|\right)^{\frac{N+2}{2}+\tau+(\frac N 2 + 1+(2^\star_\mu-1)\tau-\frac\mu 2-\sigma)}}\\
&\lesssim \|\phi\|^{2^\star_\mu}_*\lambda^{\frac{N+2}{2}}\sum_{j=1}^k \frac{1}{\left(1+\lambda|x-\xi_j|\right)^{\frac{N+2}{2}+\tau}}\end{aligned}$$ provided $\sigma$ sufficiently small.\\
Instead by using Remark \ref{R1}
$$|x|^{-\mu}* \W^{2^\star_\mu}\lesssim \sum_{j=1}^k |x|^{-\mu}* (\Uj)^{2^\star_\mu}\lesssim \sum_{j=1}^k \frac{\lambda^{\frac \mu 2}}{\left(1+\lambda|x-\xi_j|\right)^\mu}.$$
Now if $N=5$ and $0<\mu<1$ then
$$ \begin{aligned}\left(|x|^{-\mu}* \W^{2^\star_\mu}\right)\W^{2^\star_\mu-3}|\phi|^2&\lesssim \|\phi\|^2_*\lambda^{\frac 7 2}\sum_{j=1}^k \frac{1}{\left(1+\lambda|x-\xi_j|\right)^{\frac 7 2 +\tau +\left(\frac 12+\tau\right)}}\\
&\lesssim \|\phi\|^{2}_*\lambda^{\frac{7}{2}}\sum_{j=1}^k \frac{1}{\left(1+\lambda|x-\xi_j|\right)^{\frac{7}{2}+\tau}} .\end{aligned}$$
If, instead, $N=5$ and $1\leq \mu\leq 4$ or if $N\geq 6$ then
$$\begin{aligned} \left(|x|^{-\mu}* \W^{2^\star_\mu}\right)|\phi|^{2^\star_\mu-1}&\lesssim \|\phi\|^{2^\star_\mu-1}_* \lambda^{\frac {N+2}{ 2}}\sum_{j=1}^k \frac{1}{\left(1+\lambda|x-\xi_j|\right)^{\frac{N+2}{2}+\tau+  \left(\frac \mu2+(2^\star_\mu-2)\tau\right)}}\\
&\lesssim \|\phi\|^{2^\star_\mu-1}_*\lambda^{\frac{N+2}{2}}\sum_{j=1}^k \frac{1}{\left(1+\lambda|x-\xi_j|\right)^{\frac{N+2}{2}+\tau}}.\end{aligned}$$
and again by using Lemma \ref{stima2} for any $\sigma\in\left(0, \frac{N-2}{2}+2+\tau\right)$
$$\begin{aligned}|x|^{-\mu}*\W^{2^\star_\mu-1}|\phi|&\lesssim\|\phi\|_*\lambda^{N-\frac\mu 2}\left(|x|^{-\mu}*\sum_{j=1}^k \frac{1}{\left(1+\lambda|x-\xi_j|\right)^{N-\mu +2+\frac{N-2}{2}+\tau}}\right)\\
&\lesssim \|\phi\|_*\lambda^{\frac\mu 2}\sum_{j=1}^k \frac{1}{\left(1+\lambda|x-\xi_j|\right)^{\frac{N-2}{2}+2+\tau-\sigma}}.  \end{aligned}$$
Then
$$\begin{aligned}\left(|x|^{-\mu}*\W^{2^\star_\mu-1}|\phi|\right)\left( |\phi|^{2^\star_\mu-1}+\W^{2^\star_\mu-2}|\phi|\right)&\lesssim \|\phi\|^{2^\star_\mu}_* \lambda^{\frac {N+2}{ 2}}\sum_{j=1}^k \frac{1}{\left(1+\lambda|x-\xi_j|\right)^{\frac{N+2}{2}+\tau+  \left(\frac N 2-\frac \mu2+1+(2^\star_\mu-1)\tau-\sigma\right)}}\\
&+\|\phi\|^{2}_* \lambda^{\frac {N+2}{ 2}}\sum_{j=1}^k \frac{1}{\left(1+\lambda|x-\xi_j|\right)^{\frac{N+2}{2}+\tau+  \left(\frac{N-2}{2}+4-\mu +\tau-\sigma\right)}}\\
&\lesssim \|\phi\|^{2}_*\lambda^{\frac{N+2}{2}}\sum_{j=1}^k \frac{1}{\left(1+\lambda|x-\xi_j|\right)^{\frac{N+2}{2}+\tau}}\end{aligned}$$ provided $\sigma$ sufficiently small.
\end{proof}
We let $$\mathtt E:=\left\{\phi\in H_s\,\,:\,\, \|\phi\|_*\leq C \left(\frac 1 \lambda\right)^{1+\varepsilon},\,\, \varepsilon>0,\,\,\,\sum_{j=1}^k\int_\RN \phi_k \Theta_{j, \ell}\, dx=0,\quad \ell=1, 2\right\}.$$

We are ready to conclude in a standard way the existence of a unique solution of \eqref{linpb}.

\begin{proposition}\label{contrazione}
There is an integer $k_0>0$ such that for any $k\geq k_0$, $\lambda\in [L_0 k^{\frac{N-2}{N-4}}, L_1 k^{\frac{N-2}{N-4}}]$, $r\in [r_0-\delta, r_0+\delta]$, problem \eqref{linpb} has a unique solution $\phi:=\phi_{r, \lambda}\in \mathtt E$ satisfying 
\beq\label{stimaphi}
\|\phi\|_*\lesssim \left(\frac 1 \lambda\right)^{1+\varepsilon}, \quad |\mathfrak c_\ell|\lesssim \left(\frac 1 \lambda\right)^{1+\varepsilon+n_\ell}\eeq\end{proposition}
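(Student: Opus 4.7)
The plan is to recast \eqref{linpb} as a fixed point problem for the operator
$$T(\phi):=\mathcal L^{-1}\bigl(\mathcal E+\mathcal N(\phi)\bigr)$$
and apply the Banach contraction principle in the closed set $\mathtt E$, where $\mathcal L^{-1}$ denotes the bounded solution operator provided by Lemma \ref{inv}. A fixed point of $T$ in $\mathtt E$ automatically solves \eqref{linpb} with $\phi\in H_s$ satisfying the required orthogonality, so the two points to verify are (i) invariance $T(\mathtt E)\subset \mathtt E$ and (ii) strict contraction, both uniformly in $r\in[r_0-\delta,r_0+\delta]$ and $\lambda$ in the prescribed range, for $k$ large enough.

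For invariance, I would combine Lemma \ref{inv} with Lemmas \ref{stimaerrore} and \ref{stimaN} to bound
$$\|T(\phi)\|_* \leq C\bigl(\|\mathcal E\|_{**}+\|\mathcal N(\phi)\|_{**}\bigr) \leq C_1 \lambda^{-(1+\varepsilon)} + C_2\|\phi\|_*^{\min\{2,2^\star_\mu-1\}}.$$
Since $\phi\in\mathtt E$ implies $\|\phi\|_*\leq C_0 \lambda^{-(1+\varepsilon)}$ and the exponent $\min\{2,2^\star_\mu-1\}>1$, the nonlinear piece is of higher order in $\lambda^{-1}$; choosing $C_0$ suitably and then $k_0$ large gives $T(\phi)\in\mathtt E$. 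The orthogonality conditions are preserved by construction of $\mathcal L^{-1}$.

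For the contraction, I would write
$$\mathcal N(\phi_1)-\mathcal N(\phi_2)=\int_0^1 D\mathcal N\bigl(\phi_2+t(\phi_1-\phi_2)\bigr)[\phi_1-\phi_2]\,dt$$
and redo the pointwise splittings that appear in the proof of Lemma \ref{stimaN}, with one factor $\phi$ replaced by $\phi_1-\phi_2$ and one by a convex combination. This is expected to yield
$$\|\mathcal N(\phi_1)-\mathcal N(\phi_2)\|_{**}\lesssim \bigl(\|\phi_1\|_*+\|\phi_2\|_*\bigr)^{\min\{2,2^\star_\mu-1\}-1}\|\phi_1-\phi_2\|_*,$$
so that, for $\phi_i\in\mathtt E$ and $k$ sufficiently large, $\|T(\phi_1)-T(\phi_2)\|_*\leq\tfrac12\|\phi_1-\phi_2\|_*$. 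Banach's theorem then produces a unique fixed point $\phi_{r,\lambda}\in\mathtt E$; the bound $\|\phi_{r,\lambda}\|_*\lesssim \lambda^{-(1+\varepsilon)}$ is built into membership in $\mathtt E$, while the multiplier estimate is obtained by applying the second inequality in \eqref{esterr} to $h=\mathcal E+\mathcal N(\phi_{r,\lambda})$, which has $\|h\|_{**}\lesssim \lambda^{-(1+\varepsilon)}$, giving $|\mathfrak c_\ell|\lesssim \lambda^{-(1+\varepsilon)-n_\ell}$.

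The delicate point to monitor is the borderline situation $2^\star_\mu-1=1$ at $\mu=4$, for which a crude Lipschitz estimate of $\mathcal N$ is only $O(1)$ in the small parameter. Here one needs to revisit the decomposition in Lemma \ref{stimaN} and check that the genuinely quadratic contributions originating from $|x|^{-\mu}\ast(\W^{2^\star_\mu-2}\phi^2)$ dominate the $|\phi|^{2^\star_\mu-1}$ pieces, so that the effective Lipschitz constant of $\mathcal N$ on $\mathtt E$ still vanishes as $k\to\infty$ and the contraction argument goes through. Once this is verified, the rest of the proof is the standard reduction bookkeeping.
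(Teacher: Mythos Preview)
Your proposal is correct and is exactly the standard contraction-mapping argument the paper is pointing to; the paper itself gives no proof beyond the sentence ``We are ready to conclude in a standard way the existence of a unique solution of \eqref{linpb}.'' Your observation about the borderline case $\mu=4$ is well taken, and your proposed fix is the right one: when $2^\star_\mu=2$ the term $\bigl(|x|^{-\mu}*\W^{2^\star_\mu}\bigr)\bigl(|\W+\phi|^{2^\star_\mu-2}(\W+\phi)-\W^{2^\star_\mu-1}-(2^\star_\mu-1)\W^{2^\star_\mu-2}\phi\bigr)$ vanishes identically, so the surviving contributions in $\mathcal N(\phi)$ (and in $\mathcal N(\phi_1)-\mathcal N(\phi_2)$) are genuinely quadratic and the contraction goes through.
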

where $\varepsilon>0$.
\section{The reduced problem}\label{ridotto}
Let us introduce the energy functional whose critical points are solutions to (\ref{CE})
\begin{equation}
    I(u):=\frac{1}{2}\int_\RN (|\nabla u|^2+V(|x|)u^2)\, dx-\frac{1}{2\cdot2^*_\mu} \int_\RN\int_\RN \frac{|u(x)|^{2^*_\mu}|u(y)|^{2^*_\mu}}{|x-y|^\mu}\, dx\, dy
\end{equation}
and the reduced energy 
\begin{equation}
    \mathcal{F}(r,\lambda):=I(\W+\phi)
\end{equation}
where $r=|\xi_1|$ and $\phi$ is the function founded in Proposition \ref{contrazione}. In a standard way it holds the following result.
\begin{lemma}
Let $(\bar r, \bar\lambda)$ be a critical point of $\mathcal F(r, \lambda)$. Then $\mathcal W_{\bar r, \bar\lambda}+\phi$ is a critical point of the enery functional $I(u)$. Also the viceversa holds.\end{lemma}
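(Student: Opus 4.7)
The plan is to show that the reduced problem is equivalent to finding zeros of the Lagrange multipliers $\mathfrak c_1, \mathfrak c_2$ that appear in the projected equation \eqref{linpb}. By Proposition \ref{contrazione}, for every admissible pair $(r,\lambda)$ we have a unique $\phi=\phi_{r,\lambda}\in \mathtt E$ satisfying
\[
I'(\W+\phi)=\sum_{\ell=1}^{2}\mathfrak c_\ell\sum_{j=1}^{k}g'(\Uj)\psi_{j,\lambda,\ell}
\]
in the dual sense. So $\W+\phi$ is a critical point of $I$ iff $\mathfrak c_1=\mathfrak c_2=0$. The game is to translate this into a statement about $\mathcal F$.

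First I would compute $\partial_r\mathcal F$ and $\partial_\lambda\mathcal F$ by the chain rule. Using $\partial_r \W=\sum_{j}\psi_{j,\lambda,1}$ and $\partial_\lambda \W=\sum_j\psi_{j,\lambda,2}$ (cf.\ \eqref{psi}), and writing the derivatives of $\phi$ with respect to the two parameters as $\partial_r\phi$ and $\partial_\lambda\phi$, this gives, for $m=1,2$,
\[
\partial_{\alpha_m}\mathcal F(r,\lambda)=\sum_{\ell=1}^{2}\mathfrak c_\ell\sum_{j=1}^{k}\int_{\RN}g'(\Uj)\psi_{j,\lambda,\ell}\bigl(\psi_{j,\lambda,m}+\partial_{\alpha_m}\phi\bigr)\,dx,
\]
where $\alpha_1=r$, $\alpha_2=\lambda$. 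At a critical point $(\bar r,\bar\lambda)$ the left–hand side vanishes for $m=1,2$, which yields a $2\times 2$ linear system in $(\mathfrak c_1,\mathfrak c_2)$.

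The main obstacle is to check that this system is non-degenerate, and here the standard trick is to exploit the orthogonality conditions in \eqref{linpb}. Differentiating
$\sum_j\int g'(\Uj)\psi_{j,\lambda,\ell}\,\phi\,dx=0$
with respect to $\alpha_m$ shows that the cross terms $\sum_j\int g'(\Uj)\psi_{j,\lambda,\ell}\,\partial_{\alpha_m}\phi\,dx$ are controlled by $\|\phi\|_*$ times derivatives of $g'(\Uj)\psi_{j,\lambda,\ell}$, hence are of lower order in $1/\lambda$ than the diagonal entries. The diagonal entries $\sum_j\int g'(\Uj)\psi_{j,\lambda,\ell}\psi_{j,\lambda,\ell}\,dx$ scale as a positive power of $\lambda^{-2 n_\ell}$ (the same powers appearing in the estimate \eqref{esterr} of Lemma \ref{inv}), and the off-diagonal entries $(\ell\neq m)$ vanish at leading order by parity in the variable of integration. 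Therefore the matrix is invertible for $k$ large, forcing $\mathfrak c_1=\mathfrak c_2=0$. Consequently $I'(\mathcal W_{\bar r,\bar\lambda}+\phi)=0$, i.e.\ $\mathcal W_{\bar r,\bar\lambda}+\phi$ is a critical point of $I$.

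The converse is immediate: if $u=\mathcal W_{r,\lambda}+\phi$ is a critical point of $I$, then testing $I'(u)$ against $\sum_j g'(\Uj)\psi_{j,\lambda,\ell}$ and using the orthogonality conditions in \eqref{linpb} together with the non-degeneracy just discussed gives $\mathfrak c_\ell=0$. Plugging this into the chain-rule formula above shows that both $\partial_r\mathcal F$ and $\partial_\lambda\mathcal F$ vanish at $(r,\lambda)$, so $(r,\lambda)$ is a critical point of $\mathcal F$.
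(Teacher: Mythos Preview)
Your argument is correct and is precisely the standard Lyapunov--Schmidt equivalence that the paper has in mind: note that the paper gives no proof of this lemma at all, merely stating that it ``holds in a standard way.'' One small notational slip: in your chain-rule identity the test function should be $\sum_{i}\psi_{i,\lambda,m}+\partial_{\alpha_m}\phi$ paired against $\sum_{j}g'(\Uj)\psi_{j,\lambda,\ell}$, so the two indices are a priori independent; the off-diagonal pairings $i\neq j$ are lower order and can be absorbed exactly as you handle the $\partial_{\alpha_m}\phi$ terms, so the conclusion is unaffected.
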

Now we need to expand the reduced energy.
\begin{lemma}\label{ridotto}
Let $N\geq 5$.   For $k$ large it holds that
    $$\mathcal{F}(r,\lambda)=k\Bigg(A+\frac{B_1V(r)}{\lambda^2}-\sum_{j=1}^k\frac{B_2}{\lambda^{N-2}|\xi_1-\xi_j|^{N-2}}+O\Bigg(\frac{1}{\lambda^{2+\sigma}}\Bigg)\Bigg),$$ where $$A:=\frac 12 \left(1-\frac{1}{2^\star_\mu}\right)C_{N, \mu};\quad B_1:=\frac 12 \int_{\RN}\mathcal U^2;\quad B_2:=\frac 12 D_{N, \mu}$$ where $C_{N, \mu}$ and $D_{N, \mu}$ are positive constants defined respectively in \eqref{CN} and \eqref{DN}. \end{lemma}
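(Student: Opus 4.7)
The plan is first to show that replacing $\mathcal{W}+\phi$ by $\mathcal{W}$ in the energy costs only $O(\lambda^{-2-\sigma})$, and then to expand $I(\mathcal{W})$ piece by piece. Writing
\begin{equation*}
\mathcal{F}(r,\lambda)=I(\mathcal{W})+\langle I'(\mathcal{W}),\phi\rangle+\tfrac{1}{2}\int_{0}^{1}(1-t)\langle I''(\mathcal{W}+t\phi)\phi,\phi\rangle\,dt,
\end{equation*}
the linear correction is bounded by pairing $\phi$ against $-\Delta\mathcal{W}+V\mathcal{W}-g(\mathcal{W})=-(\mathcal{E}+\sum_{j}g(\mathcal{U}_{\lambda,\xi_{j}})-g(\mathcal{W}))$ and using the weighted duality $|\langle h,\phi\rangle|\lesssim\|h\|_{**}\|\phi\|_{*}$, which together with Lemma~\ref{stimaerrore} and Proposition~\ref{contrazione} gives $O(\lambda^{-2-2\varepsilon})$; the quadratic remainder is controlled similarly by $\|\phi\|_{*}^{2}=O(\lambda^{-2-2\varepsilon})$ using Lemma~\ref{stimaN}. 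Both are absorbed into the stated error.

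The remaining task is then to expand
\begin{equation*}
I(\mathcal{W})=\tfrac{1}{2}\!\int\!|\nabla\mathcal{W}|^{2}+\tfrac{1}{2}\!\int\! V(|x|)\mathcal{W}^{2}-\tfrac{1}{2\cdot 2^{\star}_{\mu}}\!\int\!\!\int\!\frac{\mathcal{W}^{2^{\star}_{\mu}}(x)\mathcal{W}^{2^{\star}_{\mu}}(y)}{|x-y|^{\mu}}\,dx\,dy
\end{equation*}
with $\mathcal{W}=\sum_{j=1}^{k}\mathcal{U}_{\lambda,\xi_{j}}$. The Dirichlet integrals satisfy $\int\nabla\mathcal{U}_{\lambda,\xi_{j}}\cdot\nabla\mathcal{U}_{\lambda,\xi_{i}}=\int(|x|^{-\mu}*\mathcal{U}_{\lambda,\xi_{j}}^{2^{\star}_{\mu}})\mathcal{U}_{\lambda,\xi_{j}}^{2^{\star}_{\mu}-1}\mathcal{U}_{\lambda,\xi_{i}}$ by the limit equation, so the diagonal piece ($i=j$) combines with the corresponding diagonal part of the nonlocal double integral to produce $kA$, with $A=\tfrac{1}{2}(1-1/2^{\star}_{\mu})C_{N,\mu}$ by the standard Pohozaev identity for $\mathcal{U}$. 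The off-diagonal interaction is obtained by expanding $\mathcal{W}^{2^{\star}_{\mu}}$ on each sector $\Omega_{h}$ as $\mathcal{U}_{\lambda,\xi_{h}}^{2^{\star}_{\mu}}+O\bigl(\sum_{i\neq h}\mathcal{U}_{\lambda,\xi_{h}}^{2^{\star}_{\mu}-1}\mathcal{U}_{\lambda,\xi_{i}}\bigr)$ and then showing that the leading cross terms reduce to $\int(|x|^{-\mu}*\mathcal{U}_{\lambda,\xi_{i}}^{2^{\star}_{\mu}})\mathcal{U}_{\lambda,\xi_{i}}^{2^{\star}_{\mu}-1}\mathcal{U}_{\lambda,\xi_{j}}$; a rescaling together with the asymptotic $|x|^{-\mu}*\mathcal{U}^{2^{\star}_{\mu}}(x)\sim c\,|x|^{\mu-N}$ at infinity pins down the rate $D_{N,\mu}/(\lambda^{N-2}|\xi_{i}-\xi_{j}|^{N-2})$, and the $k$-fold symmetry lets me factor out the outer sum as a multiplicative $k$.

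For the potential term, the change of variables $y=\lambda(x-\xi_{j})$ gives
\begin{equation*}
\tfrac{1}{2}\!\int\! V(|x|)\mathcal{U}_{\lambda,\xi_{j}}^{2}\,dx=\frac{1}{2\lambda^{2}}\!\int\! V(|\xi_{j}+y/\lambda|)\,\mathcal{U}(y)^{2}\,dy=\frac{V(r)B_{1}}{\lambda^{2}}+o(\lambda^{-2}),
\end{equation*}
by continuity of $V$ at $r_{0}$ and the integrability of $\mathcal{U}^{2}\sim|y|^{-2(N-2)}$ for $N\geq 5$; cross integrals $\int V\mathcal{U}_{\lambda,\xi_{i}}\mathcal{U}_{\lambda,\xi_{j}}$ with $i\neq j$ are of lower order by Lemma~\ref{lemmaB1}. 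Summing all contributions, each of which carries a factor $k$ from the $\mathbb{Z}_{k}$-symmetry, yields the stated expansion. The main subtlety is to pin down the exact constant $D_{N,\mu}$ in the off-diagonal nonlocal interaction: because the convolution $|x|^{-\mu}*\mathcal{U}^{2^{\star}_{\mu}}$ is itself a Riesz potential, one must carry out the interaction integral in rescaled variables and track its large-argument asymptotics carefully, rather than reducing to a pointwise product as in the purely local critical problem.
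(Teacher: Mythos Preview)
Your overall strategy matches the paper's: show $\mathcal{F}(r,\lambda)=I(\mathcal{W})+O(k\lambda^{-2-\sigma})$ via the smallness of $\phi$, then expand $I(\mathcal{W})$ into Dirichlet, potential and Choquard pieces using the $\mathbb{Z}_k$-symmetry. The paper eliminates the linear correction in one stroke by noting $\langle I'(\mathcal{W}+\phi),\phi\rangle=0$ (this holds because $\phi$ satisfies the orthogonality in \eqref{linpb}); your integral-remainder Taylor expansion is an equivalent route, though the weighted duality actually gives $|\langle h,\phi\rangle|\lesssim k\,\|h\|_{**}\|\phi\|_*$, the factor $k$ coming from summing the $k$ weight profiles, and your displayed identity for $-\Delta\mathcal{W}+V\mathcal{W}-g(\mathcal{W})$ should simply read $-\mathcal{E}$.

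There is one genuine slip. The asymptotic $|x|^{-\mu}*\mathcal{U}^{2^\star_\mu}(x)\sim c\,|x|^{\mu-N}$ is false: by the exact identity in Remark~\ref{R1} one has $|x|^{-\mu}*\mathcal{U}^{2^\star_\mu}(x)=\mathfrak{d}_{N,\mu}(1+|x|^2)^{-\mu/2}$, so the decay is $|x|^{-\mu}$. The paper does not argue via asymptotics at all; it uses this explicit formula to recognise $(|x|^{-\mu}*\mathcal{U}_{\lambda,\xi_i}^{2^\star_\mu})\,\mathcal{U}_{\lambda,\xi_i}^{2^\star_\mu-1}$ as an exact multiple of $\lambda^{(N+2)/2}(1+\lambda^2|x-\xi_i|^2)^{-(N+2)/2}$, which is what pins down the constant $D_{N,\mu}$ cleanly. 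Two further points: continuity of $V$ alone gives only $o(\lambda^{-2})$ in the potential term, not the $O(\lambda^{-2-\sigma})$ of the statement (the paper is equally terse here; a quantitative splitting is needed), and your sketch does not address the control of the many non-leading Choquard cross terms (the $I_3,I_5,\ldots,I_9$ of the paper), which is where most of the actual work sits and requires repeated use of Lemma~\ref{lemmaB1} and Lemma~\ref{stima2}.
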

\begin{proof}
Since $\phi\in\mathtt E$ solves \eqref{linpb} we get that $$\left\langle I'(\W+\phi), \phi\right\rangle=0,\quad\forall\,\, \phi\in\mathtt E$$ then there is some $t\in (0, 1)$ such that 
$$\begin{aligned}  \mathcal{F}(r,\lambda)&=I(\W)+\frac 12 D^2I(\W+t\phi)(\phi, \phi)\\
&= I(\W) +\frac 12 \int_{\RN}\left(|\nabla\phi|^2+V(|x|)\phi^2\right)-\frac 12 \int_{\RN}g'(\W+t\phi)\phi^2
\end{aligned}$$
Now $\phi$ solves the problem \eqref{linpb} and hence $$\int_{\RN}\left(|\nabla\phi|^2+V(|x|)\phi^2\right)=\int_{\RN}\left(\mathcal E+\mathcal N(\phi)\right)\phi+\int_{\RN}g'(\W)\phi^2$$
Hence
$$\begin{aligned}  \mathcal{F}(r,\lambda)&=I(\W)-\frac 12 \int_{\RN}\left(\mathcal E+\mathcal N(\phi)\right)\phi-\frac 12 \int_{\RN}\left(g'(\W+t\phi)\phi-g'(\W)\phi\right)\phi\\
&=I(\W)+\mathcal O\left(\int_{\RN}|\mathcal E||\phi|+|\mathcal N(\phi)||\phi|\right)+\mathcal O\left(\int_{\RN}|g'(\W+t\phi)\phi-g'(\W)\phi||\phi|\right)
\end{aligned}$$ 
Now by using Lemma \ref{stimaerrore} it follows that $$\begin{aligned}\int_{\RN}|\mathcal E||\phi|&\lesssim \|\mathcal E\|_{**}\|\phi\|_*\lambda^N\int_{\RN}\sum_{j=1}^k\frac{1}{(1+\lambda|x-\xi_j|)^{N+2\tau}}\, dx\\ &\lesssim k \|\mathcal E\|_{**}\|\phi\|_*\int_{\RN}\frac{1}{(1+|y|)^{N+2\tau}}\, dy\\
&\lesssim k\left(\frac 1 \lambda\right)^{2+2\varepsilon}.\end{aligned} $$
Similarly, by using Lemma \ref{stimaN}
$$\int_{\RN}|\mathcal N(\phi)||\phi|\lesssim k\|\mathcal N(\phi)\|_{**}\|\phi\|_*\lesssim k\left(\frac 1 \lambda\right)^{2+2\varepsilon}.$$ Now
$$\begin{aligned}\left(g'(\W+t\phi)-g'(\W)\right)\phi&=2^\star_\mu\underbrace{\left(|x|^{-\mu}*\left(|\W+t\phi|^{2^\star_\mu-1}-\W^{2^\star_\mu-1}\right)\phi\right)|\W+t\phi|^{2^\star_\mu-1}}_{(A)}\\
&+2^\star_\mu\underbrace{\left(|x|^{-\mu}*\W^{2^\star_\mu-1}\phi\right)\left(|\W+t\phi|^{2^\star_\mu-1}-\W^{2^\star_\mu-1}\right)}_{(B)}\\
&+(2^\star_\mu-1)\underbrace{\left(|x|^{-\mu}*\left(|\W+t\phi|^{2^\star_\mu}-\W^{2^\star_\mu}\right)\right)|\W+t\phi|^{2^\star_\mu-2}\phi}_{(C)}\\
&+(2^\star_\mu-1)\underbrace{\left(|x|^{-\mu}*\W^{2^\star_\mu}\right)\left(|\W+t\phi|^{2^\star_\mu-2}-\W^{2^\star_\mu-2}\right)\phi}_{(D)}.\end{aligned}$$
Now reasoning as in Lemma \ref{stimaN} 
$$\begin{aligned}(A)&\lesssim \left(|x|^{-\mu}*\left(\W^{2^\star_\mu-2}|\phi|^2+|\phi|^{2^\star_\mu}\right)\right)\left(\W^{2^\star_\mu-1}+|\phi|^{2^\star_\mu-1}\right)\\
&\lesssim \|\phi\|_*^2\lambda^{\frac{N+2}{2}}\sum_{j=1}^k \frac{1}{\left(1+\lambda|x-\xi_j|\right)^{\frac{N+2}{2}+\tau+(\frac N 2 + 2+\tau-\mu-\sigma)}}\\&+\|\phi\|^{2^\star_\mu+1}_*\lambda^{\frac{N+2}{2}}\sum_{j=1}^k \frac{1}{\left(1+\lambda|x-\xi_j|\right)^{\frac{N+2}{2}+\tau+( 2+2^\star_\mu\tau-\frac\mu 2-\sigma)}}\\
&+\|\phi\|_*^{2^\star_\mu}\lambda^{\frac{N+2}{2}}\sum_{j=1}^k \frac{1}{\left(1+\lambda|x-\xi_j|\right)^{\frac{N+2}{2}+\tau+(\frac N 2 + 1+(2^\star_\mu-1)\tau-\frac\mu 2-\sigma)}}\\
\end{aligned}$$ 
and hence (by a straightforward computation)
$$\begin{aligned}\int_{\RN}\left(|x|^{-\mu}*\left(|\W+t\phi|^{2^\star_\mu-1}-\W^{2^\star_\mu-1}\right)\phi\right)|\W+t\phi|^{2^\star_\mu-1}\phi\, \lesssim k\|\phi\|^2_*\lesssim \frac{k}{\lambda^{2+\sigma}}.\end{aligned}$$
We have also that
$$(B)\lesssim \left(|x|^{-\mu}*\W^{2^\star_\mu-1}|\phi|\right)\left(\W^{2^\star_\mu-2}|\phi|+|\phi|^{2^\star_\mu-1}\right)$$
$$(C)\lesssim \left(|x|^{-\mu}*\left(\W^{2^\star_\mu-1}|\phi|+|\phi|^{2^\star_\mu}\right)\right)\left(\W^{2^\star_\mu-2}|\phi|+|\phi|^{2^\star_\mu-1}\right)$$ and
$$(D)\lesssim \left(|x|^{-\mu}*\W^{2^\star_\mu}\right)\left\{\begin{aligned}&\left(\W^{2^\star_\mu-3}|\phi|+|\phi|^{2^\star_\mu-1}\right)\quad&\mbox{if}\,\, N=5\,\, \mbox{and}\,\, 0<\mu<1\\
&|\phi|^{2^\star_\mu-1}\quad&\mbox{if}\,\, N=5\,\, \mbox{and}\,\, 1\leq\mu\leq 4\quad &\mbox{or}\,\, N\geq 6.\end{aligned}\right.
$$Again, we use the estimate in Lemma \ref{stimaN} and reasoning as for the term (A), it follows that
$$\int_{\RN}|g'(\W+t\phi)\phi-g'(\W)\phi||\phi|=\mathcal O\left(\frac{k}{\lambda^{2+\sigma}}\right).$$

 It remains to evaluate $I(\W)$. We get, by using the symmetry, 
    $$\begin{aligned}\frac 12\int_{\RN}|\nabla \W|^2&=\frac 12\sum_{j=1}\int_{\RN}|\nabla \Uj|^2+\frac 12\sum_{i\neq j}\int_{\RN}\nabla \Ui\nabla \Uj\\
    &=\frac 12\sum_{j=1}^k\int_{\RN}\left(|x|^{-\mu}*(\Uj)^{2^\star_\mu}\right)(\Uj)^{2^\star_\mu}+\frac 12\sum_{i\neq j}^k\int_{\RN}g(\Ui)\Uj\\
   &= \frac 12 kC_{N, \mu}+\frac 12\sum_{i\neq j}^k \int_{\RN}\left(|x|^{-\mu}* (\Ui)^{2^\star_\mu}\right)(\Ui)^{2^\star_\mu-1}\Uj\\
  &=  \frac 12 kC_{N, \mu}+\frac 12\mathfrak d_{N,\mu}\alpha_{N, \mu}^{2^\star_\mu}\sum_{i\neq j}^k \int_{\RN}\frac{\lambda^{\frac{N+2}{2}}}{\left(1+\lambda^2|x-\xi_i|^2\right)^{\frac{N+2}{2}}}\frac{\lambda^{\frac{N-2}{2}}}{\left(1+\lambda^2|x-\xi_j|^2\right)^{\frac{N-2}{2}}}\\
  &=k \left(\frac 12C_{N, \mu}+\frac 12\mathfrak d_{N,\mu}\alpha_{N, \mu}^{2^\star_\mu}\sum_{j=2}^k \int_{\RN}\frac{\lambda^{\frac{N+2}{2}}}{\left(1+\lambda^2|x-\xi_1|^2\right)^{\frac{N+2}{2}}}\frac{\lambda^{\frac{N-2}{2}}}{\left(1+\lambda^2|x-\xi_j|^2\right)^{\frac{N-2}{2}}}\right)\\
  &=k \left(\frac 12 C_{N, \mu}+\frac 12D_{N, \mu}\sum_{j=2}^k \frac{1}{\lambda^{N-2}|\xi_j-\xi_1|^{N-2}}+\mathcal O\left(\sum_{j=2}^k \frac{1}{\lambda^{N-2+\sigma}|\xi_j-\xi_1|^{N-2+\sigma}}\right)\right)
   \end{aligned}$$
    for some $\sigma>0$, where \beq\label{CN}C_{N, \mu}:=\mathfrak d_{N, \mu}\alpha_{N, \mu}^{2^\star_\mu}\int_{\RN}\frac{1}{(1+|y|^2)^{N}}\eeq and \beq\label{DN}D_{N, \mu}:=\mathfrak d_{N, \mu}\alpha_{N, \mu}^{2^\star_\mu}\int_{\RN}\frac{1}{(1+|y|^2)^{\frac{N+2}{2}}}.\eeq Moreover, in a very standard way it follows that
    $$\begin{aligned}\int_{\RN} V(|x|)\W^2&= k\left(\int_{\RN}V(|x|)(\U)^2+\mathcal O\left(\sum_{j=2}^k\int_{\RN} \U \Uj\right)\right)\\
    &= k \left(\frac{V(r)}{\lambda^2}\int_{\RN}\mathcal U^2+\mathcal O\left(\left(\frac 1 \lambda\right)^{2+\sigma}\right)\right).\end{aligned}$$ At the end we analyze the Choquard type term. First, we have that for any $x\in\Omega_1$ 
    \beq\label{e1}
    \W^{2^\star_\mu}:= \left(\U+\sum_{j=2}^k\Uj\right)^{2^\star_\mu}=(\U)^{2^\star_\mu}+{2^\star_\mu}(\U)^{2^\star_\mu-1}\sum_{j=2}^k \Uj +\mathcal O\left((\U)^{\frac{2^\star_\mu}{2}}\sum_{j=2}^k (\Uj)^{\frac{2^\star_\mu}{2}}\right).\eeq Hence
    $$\begin{aligned} \int_{\RN}\left(|x|^{-\mu}* \W^{2^\star_\mu}\right)\W^{2^\star_\mu}&=k\int_{\Omega_1}\left(|x|^{-\mu}* \W^{2^\star_\mu}\right)\W^{2^\star_\mu}\\
    &\hskip-3.0cm =\underbrace{k \int_{\Omega_1}\left(|x|^{-\mu}*(\U)^{2^\star_\mu}\right)(\U)^{2^\star_\mu}}_{(I_1):=C_{N,\mu}}+\underbrace{{2^\star_\mu}k \int_{\Omega_1}\left(|x|^{-\mu}*(\U)^{2^\star_\mu}\right)(\U)^{2^\star_\mu-1}\sum_{j=2}^k \Uj}_{(I_2)} \\
    &\hskip-3.0cm+\underbrace{k\mathcal O\left(\int_{\Omega_1}\left(|x|^{-\mu}*(\U)^{2^\star_\mu}\right) (\U)^{\frac{2^\star_\mu}{2}}\sum_{j=2}^k (\Uj)^{\frac{2^\star_\mu}{2}}\right)}_{(I_3)}\\
    &\hskip-3.0cm+\underbrace{{2^\star_\mu}k \int_{\Omega_1}\left(|x|^{-\mu}*(\U)^{2^\star_\mu-1}\sum_{j=2}^k \Uj \right)(\U)^{2^\star_\mu}}_{(I_4)}\\
    &\hskip-3.0cm+\underbrace{(2^\star_\mu)^2 k \int_{\Omega_1}\left(|x|^{-\mu}*(\U)^{2^\star_\mu-1}\sum_{j=2}^k \Uj \right)(\U)^{2^\star_\mu-1}\sum_{j=2}^k \Uj}_{(I_5)}\\
     &\hskip-3.0cm+\underbrace{\mathcal O\left(k\int_{\Omega_1}\left(|x|^{-\mu}*(\U)^{2^\star_\mu-1}\sum_{j=2}^k \Uj \right)(\U)^{\frac{2^\star_\mu}{2}}\sum_{j=2}^k (\Uj)^{\frac{2^\star_\mu}{2}}\right)}_{(I_6)}\\
      &\hskip-3.0cm+\underbrace{\mathcal O\left(k\int_{\Omega_1}\left(|x|^{-\mu}*(\U)^{\frac{2^\star_\mu}{2}}\sum_{j=2}^k (\Uj)^{\frac{2^\star_\mu}{2}}\right)(\U)^{2^\star_\mu}\right)}_{(I_7)}\\
       &\hskip-3.0cm+\underbrace{\mathcal O\left(k\int_{\Omega_1}\left(|x|^{-\mu}*(\U)^{\frac{2^\star_\mu}{2}}\sum_{j=2}^k (\Uj)^{\frac{2^\star_\mu}{2}}\right)(\U)^{2^\star_\mu-1}\sum_{j=2}^k \Uj\right)}_{(I_8)}\\
        &\hskip-3.0cm+\underbrace{\mathcal O\left(k\int_{\Omega_1}\left(|x|^{-\mu}*(\U)^{\frac{2^\star_\mu}{2}}\sum_{j=2}^k (\Uj)^{\frac{2^\star_\mu}{2}}\right)(\U)^{\frac{2^\star_\mu}{2}}\sum_{j=2}^k (\Uj)^{\frac{2^\star_\mu}{2}}\right)}_{(I_9)}.
     \end{aligned}$$
     Now, reasoning as before,
     $$(I_2)=k\left(2^\star_\mu D_{N, \mu}\sum_{j=2}^k \frac{1}{\lambda^{N-2}|\xi_j-\xi_1|^{N-2}}+\mathcal O\left(\sum_{j=2}^k \frac{1}{\lambda^{N-2+\sigma}|\xi_j-\xi_1|^{N-2+\sigma}}\right)\right)$$
     Moreover by Lemma \ref{lemmaB1} and Remark \ref{R1} $$\begin{aligned}(I_3)&\lesssim k \sum_{j=2}^k\lambda^N\int_{\RN}\frac{1}{(1+\lambda|x-\xi_1|)^{N+\frac\mu 2}}\frac{1}{(1+\lambda|x-\xi_j|)^{N+\frac\mu 2}}\\
     &\lesssim k \sum_{j=2}^k\frac{1}{\lambda^{N-\frac\mu 2}|\xi_1-\xi_j|^{N-\frac\mu 2}}\int_{\RN}\frac{1}{(1+|y|)^{N+\frac\mu 2}}\, dy\\
     &=\mathcal O\left(k \sum_{j=2}^k\frac{1}{\lambda^{N-2+\sigma}|\xi_1-\xi_j|^{N-2+\sigma}}\right)\end{aligned}$$ since $N-\frac\mu 2 >N-2$. 
Again by Lemma \ref{lemmaB1} and Lemma \ref{stima2} we get that 
$$\begin{aligned}\left(|x|^{-\mu}*(\U)^{2^\star_\mu-1}\sum_{j=2}^k \Uj \right)&\lesssim \sum_{j=2}^k\frac{1}{\lambda^{N-2}|\xi_1-\xi_j|^{N-2}}\lambda^{\frac \mu 2}\int_{\RN}\frac{1}{|y|^\mu}\frac{1}{(1+|\lambda(x-\xi_1)-y|)^{N-\mu+2}}\\
&\lesssim \sum_{j=2}^k\frac{1}{\lambda^{N-2}|\xi_1-\xi_j|^{N-2}}\lambda^{\frac \mu 2}\frac{1}{(1+|\lambda(x-\xi_1)|)^{2-\eta}}\\
	\end{aligned}$$ for some $\eta\in (0, 2)$. Then
	
	$$\begin{aligned}(I_4) &={2^\star_\mu}k \int_{\RN}\left(|x|^{-\mu}*(\U)^{2^\star_\mu-1}\sum_{j=2}^k \Uj \right)(\U)^{2^\star_\mu}-{2^\star_\mu}k\int_{\RN\setminus\Omega_1}\left(|x|^{-\mu}*(\U)^{2^\star_\mu-1}\sum_{j=2}^k \Uj \right)(\U)^{2^\star_\mu}\\
	&= {2^\star_\mu}k \int_{\RN}\left(|x|^{-\mu}*(\U)^{2^\star_\mu}\right)(\U)^{2^\star_\mu-1}\sum_{j=2}^k \Uj -{2^\star_\mu}k\int_{\RN\setminus\Omega_1}\left(|x|^{-\mu}*(\U)^{2^\star_\mu-1}\sum_{j=2}^k \Uj \right)(\U)^{2^\star_\mu}\\
		&=k\left({2^\star_\mu}D_{N,\mu}\sum_{j=1}^k \frac{1}{\lambda^{N-2}|\xi_1-\xi_j|^{N-2}}+\mathcal O\left(\sum_{j=2}^k \frac{1}{\lambda^{N-2+\sigma}|\xi_j-\xi_1|^{N-2+\sigma}}\right)+\left(\frac 1 \lambda\right)^{\varepsilon}\sum_{j=2}^k \frac{1}{\lambda^{N-2}|\xi_j-\xi_1|^{N-2}}\right)\end{aligned}$$
	with $\varepsilon>0$, since $$k 2^\star_\mu \int_{\RN}\left(|x|^{-\mu}*(\U)^{2^\star_\mu}\right)(\U)^{2^\star_\mu-1}\sum_{j=2}^k \Uj=(I_2)$$ and $$\begin{aligned}\int_{\RN\setminus\Omega_1}\left(|x|^{-\mu}*(\U)^{2^\star_\mu-1}\sum_{j=2}^k \Uj \right)(\U)^{2^\star_\mu}&\lesssim \sum_{j=1}^k \frac{1}{\lambda^{N-2}|\xi_1-\xi_j|^{N-2}}\int_{|y|>\frac{\lambda|\xi_1-\xi_2|}{2}}\frac{1}{(1+|y|)^{2N-\mu+2-\eta}}\\ &\lesssim \sum_{j=1}^k \frac{1}{\lambda^{N-2}|\xi_1-\xi_j|^{N-2}}\left(\frac 1 \lambda\right)^{N-\mu+2-\eta}\end{aligned}$$ and $N-\mu+2-\eta>0$. Moreover 
	$$\begin{aligned}(I_5)&\lesssim k\sum_{j=2}^k\frac{1}{\lambda^{N-2}|\xi_1-\xi_j|^{N-2}}\lambda^{N}\sum_{j=2}^k\int_{\RN}\frac{1}{(1+\lambda|x-\xi_1|)^{N+4-\mu-\eta}}\frac{1}{(1+\lambda|x-\xi_j|)^{N-2}}\\
     &\lesssim k \sum_{j=2}^k\frac{1}{\lambda^{N-2+\sigma}|\xi_1-\xi_j|^{N-2+\sigma}}\int_{\RN}\frac{1}{(1+|y|)^{N+(N-\mu+2-\eta-\sigma)}}\, dy\\
     &=\mathcal O\left(k \sum_{j=2}^k\frac{1}{\lambda^{N-2+\sigma}|\xi_1-\xi_j|^{N-2+\sigma}}\right)\end{aligned}$$ by choosing $\sigma>0$ sufficiently small so that $N-\mu+2-\eta-\sigma>0$, 
     and 
$$\begin{aligned}(I_6)&\lesssim k \sum_{j=2}^k\frac{1}{\lambda^{N-2}|\xi_1-\xi_j|^{N-2}}\lambda^{N}\sum_{j=2}^k\int_{\RN}\frac{1}{(1+\lambda|x-\xi_1|)^{N +2-\eta-\frac \mu 2}}\frac{1}{(1+\lambda|x-\xi_j|)^{N -\frac\mu 2}}\\
     &\lesssim k \sum_{j=2}^k\frac{1}{\lambda^{N-2+\sigma}|\xi_1-\xi_j|^{N-2+\sigma}}\int_{\RN}\frac{1}{(1+|y|)^{N+(N-\mu+2-\eta-\sigma)}}\, dy\\
          &=\mathcal O\left(k \sum_{j=2}^k\frac{1}{\lambda^{N-2+\sigma}|\xi_1-\xi_j|^{N-2+\sigma}}\right)\end{aligned}$$ again by choosing $\sigma>0$ sufficiently small so that $N-\mu+2-\eta-\sigma>0$.
      Now, by using Lemma \ref{lemmaB1}

     $$\begin{aligned}&|x|^{-\mu}* (\U)^{\frac{2^\star_\mu}{2}}(\Uj)^{\frac{2^\star_\mu}{2}}\lesssim \lambda^{N-\frac \mu 2}\int_{\RN}\frac{1}{|x-y|^\mu}\frac{1}{(1+\lambda|y-\xi_1|)^{N-\frac\mu 2}}\frac{1}{(1+\lambda|y-\xi_j|)^{N-\frac\mu 2}}\\
     &\lesssim \frac{1}{\lambda^{N-2+\sigma}|\xi_1-\xi_j|^{N-2+\sigma}}\lambda^{N-\frac\mu 2}\int_{\RN}\frac{1}{|x-y|^\mu}\frac{1}{(1+\lambda|y-\xi_1|)^{N-\mu+2-\sigma}}\, dy\\
     &+\frac{1}{\lambda^{N-2+\sigma}|\xi_1-\xi_j|^{N-2+\sigma}}\lambda^{N-\frac\mu 2}\int_{\RN}\frac{1}{|x-y|^\mu}\frac{1}{(1+\lambda|y-\xi_j|)^{N-\mu+2-\sigma}}\, dy\\
     &\lesssim \frac{1}{\lambda^{N-2+\sigma}|\xi_1-\xi_j|^{N-2+\sigma}}\lambda^{\frac\mu 2}\int_{\RN}\frac{1}{|y|^\mu}\frac{1}{(1+|\lambda(x-\xi_1)-y|)^{N-\mu+2-\sigma}}\, dy\\
     &+\frac{1}{\lambda^{N-2+\sigma}|\xi_1-\xi_j|^{N-2+\sigma}}\lambda^{\frac\mu 2}\int_{\RN}\frac{1}{|x-y|^\mu}\frac{1}{(1+|\lambda(x-\xi_j)-y|)^{N-\mu+2-\sigma}}\, dy\\
     &\lesssim \frac{1}{\lambda^{N-2+\sigma}|\xi_1-\xi_j|^{N-2+\sigma}}\lambda^{\frac\mu 2}\left(\frac{1}{(1+\lambda|x-\xi_1|)^{2-\sigma-\eta}}+\frac{1}{(1+\lambda|x-\xi_j|)^{2-\sigma-\eta}}\right)
\end{aligned}$$
for $\sigma>0$ sufficiently small and $\eta\in (0, 2-\sigma)$. Then
$$\begin{aligned}(I_7)&\lesssim k\sum_{j=2}^k \frac{1}{\lambda^{N-2+\sigma}|\xi_1-\xi_j|^{N-2+\sigma}}\lambda^{N}\int_{\RN}\frac{1}{(1+\lambda|x-\xi_1|)^{2-\sigma-\eta+2N-\mu }}\, dx\\
&\lesssim k\sum_{j=2}^k \frac{1}{\lambda^{N-2+\sigma}|\xi_1-\xi_j|^{N-2+\sigma}}\int_{\RN}\frac{1}{(1+|y|)^{N+\left(N+2-\sigma-\eta-\mu \right)}}\, dx\\ 
&\lesssim k\sum_{j=2}^k \frac{1}{\lambda^{N-2+\sigma}|\xi_1-\xi_j|^{N-2+\sigma}}\end{aligned}$$

since $\eta$ and $\sigma$  are so that $N+2-\sigma-\eta-\mu=(N-\mu)+(2-\sigma-\eta)>0$.     Again
$$\begin{aligned}(I_8)&\lesssim k\sum_{j=2}^k \frac{1}{\lambda^{N-2+\sigma}|\xi_1-\xi_j|^{N-2+\sigma}}\lambda^{N}\int_{\RN}\frac{1}{(1+\lambda|x-\xi_1|)^{2-\sigma-\eta+2N-\mu }}\, dx\\
&\lesssim k\sum_{j=2}^k \frac{1}{\lambda^{N-2+\sigma}|\xi_1-\xi_j|^{N-2+\sigma}}\end{aligned}$$ and similarly, it follows that $$(I_9)\lesssim k\sum_{j=2}^k \frac{1}{\lambda^{N-2+\sigma}|\xi_1-\xi_j|^{N-2+\sigma}}.$$
    \end{proof}

\begin{lemma}\label{stimader}
Let $N\geq 5$.   For $k$ large it holds that
    $$\frac{\partial\mathcal{F}(r,\lambda)}{\partial\lambda}=k\left(-\frac{2B_1V(r)}{\lambda^3}+\sum_{j=1}^k\frac{B_2(N-2)}{\lambda^{N-1}|\xi_1-\xi_j|^{N-2}}+O\Bigg(\frac{1}{\lambda^{3+\sigma}}\Bigg)\right),$$ where $\sigma>0$ is a fixed constant.
\end{lemma}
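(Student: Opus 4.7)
The plan is to differentiate the expansion obtained in Lemma \ref{ridotto} term by term, and to justify that the $\lambda$-derivative of the $O(k/\lambda^{2+\sigma})$ remainder is itself $O(k/\lambda^{3+\sigma})$. Writing $\mathcal F(r,\lambda)=I(\W+\phi_{r,\lambda})$ and using the smoothness of $\phi_{r,\lambda}$ in $\lambda$ coming from the contraction argument in Proposition \ref{contrazione}, I would start from the chain-rule identity
$$\partial_\lambda\mathcal F=\langle I'(\W+\phi),\partial_\lambda\W\rangle+\langle I'(\W+\phi),\partial_\lambda\phi\rangle.$$
Since $\phi$ solves the projected problem \eqref{linpb}, one has the distributional identity $I'(\W+\phi)=\sum_{\ell=1}^{2}\mathfrak c_\ell\sum_{j=1}^{k}g'(\Uj)\psi_{j,\lambda,\ell}$, which will be exploited to simplify both pieces.

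For the $\partial_\lambda\phi$ contribution, the idea is to differentiate in $\lambda$ the orthogonality conditions $\sum_j\int g'(\Uj)\psi_{j,\lambda,\ell}\phi\,dx=0$ in order to rewrite $\sum_j\int g'(\Uj)\psi_{j,\lambda,\ell}\,\partial_\lambda\phi\,dx$ as $-\sum_j\int\partial_\lambda\bigl(g'(\Uj)\psi_{j,\lambda,\ell}\bigr)\phi\,dx$. Combining $\|\phi\|_*\lesssim\lambda^{-1-\varepsilon}$ with $|\mathfrak c_\ell|\lesssim\lambda^{-1-\varepsilon-n_\ell}$ from \eqref{stimaphi}, this piece is shown to be of order $O(k/\lambda^{3+\sigma})$ and is therefore absorbed in the error.

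For the main piece $\langle I'(\W+\phi),\partial_\lambda\W\rangle$, I would split $I'(\W+\phi)=I'(\W)+\bigl(I'(\W+\phi)-I'(\W)\bigr)$. The leading part $\langle I'(\W),\partial_\lambda\W\rangle=\partial_\lambda I(\W)$ is obtained by literally differentiating in $\lambda$ the expansion of $I(\W)$ carried out in the proof of Lemma \ref{ridotto}: the quadratic potential contribution $B_1V(r)/\lambda^{2}$ produces $-2B_1V(r)/\lambda^{3}$, each Coulomb-type term $B_2/(\lambda^{N-2}|\xi_1-\xi_j|^{N-2})$ produces $-(N-2)B_2/(\lambda^{N-1}|\xi_1-\xi_j|^{N-2})$, and every remainder piece $(I_3)$--$(I_9)$ from the proof of Lemma \ref{ridotto} gains an extra factor $1/\lambda$ thanks to $\partial_\lambda\Uj=\psi_{j,\lambda,2}$ together with the pointwise behavior of $\psi_{j,\lambda,2}$ coming from \eqref{DERL}. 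The residual $\langle I'(\W+\phi)-I'(\W),\partial_\lambda\W\rangle$ consists of $\int V(|x|)\phi\,\partial_\lambda\W\,dx$ and of Choquard-type pieces that are structurally identical to those estimated in Lemmas \ref{stimaerrore} and \ref{stimaN}, except that one factor of $\W$ is replaced by $\partial_\lambda\W$; each such replacement yields an additional $\lambda^{-1}$ factor via the convolution bounds of Section \ref{use}, upgrading the $O(k/\lambda^{2+\sigma})$ of Lemma \ref{ridotto} to the claimed $O(k/\lambda^{3+\sigma})$.

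The main obstacle I anticipate is the careful bookkeeping of the nonlocal pieces: one must verify, in every Hardy--Littlewood--Sobolev convolution estimate appearing in the proof of Lemma \ref{ridotto}, that replacing a bubble factor by its $\lambda$-derivative preserves the integrability conditions on the decay weights and indeed produces a clean additional $1/\lambda$, rather than disturbing the exponents in the weighted norms $\|\cdot\|_*$ and $\|\cdot\|_{**}$. The decay of $\psi_{j,\lambda,2}$ is subtler than that of $\Uj$ because of the sign change visible in \eqref{DERL}, but in absolute value it is dominated by $\lambda^{-1}\Uj$ up to a harmless factor, which should make every bound in Section \ref{use} go through with the extra $\lambda^{-1}$ and yield the stated expansion.
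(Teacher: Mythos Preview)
Your overall strategy matches the paper's proof: chain rule, use the projected equation for the $\partial_\lambda\phi$ piece via the differentiated orthogonality relation, and split the $\partial_\lambda\W$ piece into $\langle I'(\W),\partial_\lambda\W\rangle$ plus a $\phi$-residual. There is, however, one omission you must address. In the residual $\langle I'(\W+\phi)-I'(\W),\partial_\lambda\W\rangle$ you list only the potential and Choquard contributions, but there is also the gradient piece $\int_{\RN}\nabla\phi\cdot\nabla(\partial_\lambda\W)$. A naive estimate of this term, using only $|\psi_{j,\lambda,2}|\lesssim\lambda^{-1}\Uj$ and $\|\phi\|_*\lesssim\lambda^{-1-\varepsilon}$, gives at best $O(k\lambda^{-2-\varepsilon})$, which is one full power of $\lambda$ short of what you need. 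The paper disposes of it by observing that it \emph{vanishes exactly}: differentiating $-\Delta\Uj=g(\Uj)$ in $\lambda$ gives $-\Delta\psi_{j,\lambda,2}=g'(\Uj)\psi_{j,\lambda,2}$, hence $\int_{\RN}\nabla\phi\cdot\nabla\psi_{j,\lambda,2}=\int_{\RN} g'(\Uj)\psi_{j,\lambda,2}\,\phi$, and summing over $j$ yields zero by the orthogonality constraint in \eqref{linpb}. This cancellation is essential and should be stated.

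On the leading part there is a mild methodological difference worth noting. The paper does not differentiate the expansion of $I(\W)$ from Lemma~\ref{ridotto}; it recomputes $\langle I'(\W),\partial_\lambda\W\rangle$ directly, and for the Choquard interaction it invokes the identity $\int_{\RN}\frac{1-|y|^2}{(1+|y|^2)^{(N+4)/2}}\,dy=-\tfrac{N-2}{N+2}\int_{\RN}(1+|y|^2)^{-(N+2)/2}\,dy$ to recover the exact constant $(N-2)B_2$. Your route via $\langle I'(\W),\partial_\lambda\W\rangle=\partial_\lambda I(\W)$ bypasses this identity and obtains the constants for free, at the cost of checking (as you correctly anticipate) that every remainder integrand in the proof of Lemma~\ref{ridotto} differentiates with an extra factor $\lambda^{-1}$. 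Both routes are valid once the gradient cancellation above is inserted.
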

\begin{proof}
We have
\beq\label{s3}\begin{aligned}\frac{\partial\mathcal{F}(r,\lambda)}{\partial\lambda}&=\left\langle I'(\W+\phi), \frac{\partial\W}{\partial\lambda}+\frac{\partial\phi}{\partial\lambda}\right\rangle\\
&=\left\langle I'(\W+\phi), \frac{\partial\W}{\partial\lambda}\right\rangle+\sum_{\ell=1}^2\sum_{j=1}^k \mathfrak c_\ell \left\langle g'\left(\Uj\right)\psi_{j, \lambda, \ell}, \frac{\partial\phi}{\partial\lambda}\right\rangle\\\end{aligned}\eeq
Now
$$\begin{aligned}\left\langle I'(\W+\phi), \frac{\partial\W}{\partial\lambda}\right\rangle&=\left\langle I'(\W), \frac{\partial\W}{\partial\lambda}\right\rangle+\int_{\RN}\left(\nabla\phi\nabla \frac{\partial\W}{\partial\lambda}+V(|x|)\phi\frac{\partial\W}{\partial\lambda}\right) \\
&\underbrace{-\int_{\RN}\left(|x|^{-\mu}*|\W+\phi|^{2^\star_\mu}\right)|\W+\phi|^{2^\star_\mu-1}\frac{\partial\W}{\partial\lambda}}_{(A)}\\&\underbrace{+\int_{\RN}\left(|x|^{-\mu}*|\W|^{2^\star_\mu}\right)\W^{2^\star_\mu-1}\frac{\partial\W}{\partial\lambda}}_{(B)}.\end{aligned}$$ Now
$$\int_{\RN}\nabla\phi\nabla \frac{\partial\W}{\partial\lambda}=\sum_{j=1}^k\int_{\RN}\nabla\phi\nabla \frac{\partial\Uj}{\partial\lambda}=\sum_{j=1}^k\int_{\RN}\nabla\phi\nabla \psi_{j,\lambda,2}=0$$ since $\phi\in\mathtt E$. Moreover easily follows that
$$\int_{\RN}V(|x|)\phi\frac{\partial\W}{\partial\lambda}=\mathcal O\left(k \frac{\|\phi\|_*}{\lambda^{2+\varepsilon}}\right)=\mathcal O\left(k \frac{1}{\lambda^{3+\varepsilon}}\right).$$ Moreover
$$\begin{aligned}(A)+(B)&=-\int_{\RN}\left(|x|^{-\mu}*\left(|\W+\phi|^{2^\star_\mu}-\W^{2^\star_\mu}\right)\right)|\W+\phi|^{2^\star_\mu-1}\frac{\partial\W}{\partial\lambda}\\
&-\int_{\RN}\left(|x|^{-\mu}*|\W|^{2^\star_\mu}\right)\left(|\W+\phi|^{2^\star_\mu-1}-\W^{2^\star_\mu-1}\right)\frac{\partial\W}{\partial\lambda}\end{aligned}$$ and then reasoning as in Lemma \ref{ridotto} and by using the fact that $\psi_{j,\lambda, 2}=\mathcal O\left(\frac 1 \lambda \Uj\right)$ we get that  $$(A)+(B)=\mathcal O\left(k \frac{1}{\lambda^{3+\varepsilon}}\right).$$ Now we need to evaluate the other term in \eqref{s3}. To do so we need to use the estimate on $\mathfrak c_\ell$ given in Proposition \ref{contrazione}.\\ Now since $$\sum_{j=1}^k \left\langle g'\left(\Uj\right)\psi_{j, \lambda, \ell}, \frac{\partial\phi}{\partial\lambda}\right\rangle=-\sum_{j=1}^k \left\langle \frac{\partial \left(g'\left(\Uj\right)\psi_{j, \lambda, \ell}\right)}{\partial\lambda}, \phi\right\rangle$$ then we have

$$\begin{aligned}\left|\sum_{j=1}^k \mathfrak c_\ell \left\langle g'\left(\Uj\right)\psi_{j, \lambda, \ell}, \frac{\partial\phi}{\partial\lambda}\right\rangle\right|&\lesssim |\mathfrak c_2| \|\phi\|_* k=\mathcal O\left(\frac{k}{\lambda^{3+\sigma}}\right)\end{aligned}$$ 
At the end $$\begin{aligned}\left\langle I'(\W), \frac{\partial\W}{\partial\lambda}\right\rangle&=\int_{\RN}\nabla \W\nabla\frac{\partial\W}{\partial\lambda}+\int_{\RN}V(|x|)\W \frac{\partial\W}{\partial\lambda}-\int_{\RN}\left(|x|^{-\mu}*\W^{2^\star_\mu}\right)\W^{2^\star_\mu-1}\frac{\partial\W}{\partial\lambda}.\end{aligned}$$
Now, since $$-\Delta\W=\sum_{j=1}^k\left(|x|^{-\mu}*(\Uj)^{2^\star_\mu}\right)(\Uj)^{2^\star_\mu-1}$$ and by using the explicit expression of $\frac{\partial \Uj}{\partial\lambda}$ then
$$\begin{aligned}\int_{\RN}\nabla \W\nabla\frac{\partial\W}{\partial\lambda}&=k\frac{N-2}{2}\alpha_{N, \mu}^{2^\star_\mu}\mathfrak d_{N, \mu}\frac{c_1}{\lambda}-k\frac{N-2}{2}\alpha_{N, \mu}^{2^\star_\mu}\mathfrak d_{N, \mu}\frac{c_2}{\lambda}\sum_{j=2}^k\frac{1}{\lambda^{N-2}|\xi_1-\xi_j|^{N-2}}\\
&+\mathcal O\left(\frac{1}{\lambda}\sum_{j=2}^k\frac{1}{\lambda^{N}|\xi_1-\xi_j|^{N}}\right)\end{aligned}$$
where $$c_1:=\int_{\RN}\frac{1-|y|^2}{(1+|y|^2)^{N+1}},\qquad c_2:=\int_{\RN}\frac{1}{(1+|y|^2)^{\frac{N+2}{2}}}.$$ In a standard way it follows that
$$\int_{\RN}V(|x|)\W \frac{\partial\W}{\partial\lambda}=-k\left(\int_{\mathbb R^N}\mathcal U^2\right)\frac{V(r)}{\lambda^3}+\mathcal O\left(\frac{k}{\lambda^{3+\sigma}}\right).$$
Now it remains to evaluate the Choquard term.\\ We remark that for every $x\in\Omega_1$
$$\W^{2^\star_\mu}=(\U)^{2^\star_\mu}+{2^\star_\mu}(\U)^{2^\star_\mu-1}\sum_{j=2}^k \Uj+\mathcal O\left(\sum_{j=2}^k(\U)^{\frac{2^\star_\mu}{2}}(\Uj)^{\frac{2^\star_\mu}{2}}\right)$$ and 
$$\W^{2^\star_\mu-1}=(\U)^{2^\star_\mu-1}+({2^\star_\mu}-1)(\U)^{2^\star_\mu-2}\sum_{j=2}^k \Uj+\mathcal O\left(\sum_{j=2}^k(\U)^{\frac{2^\star_\mu}{2}-1}(\Uj)^{\frac{2^\star_\mu}{2}}\right)$$ and hence

$$\begin{aligned}&\int_{\RN}\left(|x|^{-\mu}*\W^{2^\star_\mu}
\right)\W^{2^\star_\mu-1} \frac{\partial\W}{\partial\lambda}= k \int_{\Omega_1}\left(|x|^{-\mu}*\W^{2^\star_\mu}\right)\W^{2^\star_\mu} \frac{\partial\W}{\partial\lambda}\\
\end{aligned}$$
Now again
$$\begin{aligned}\int_{\RN}\left(|x|^{-\mu}*(\U)^{2^\star_\mu}
\right)(\U)^{2^\star_\mu-1} \frac{\partial\W}{\partial\lambda}&=k\frac{N-2}{2}\alpha_{N, \mu}^{2^\star_\mu}\mathfrak d_{N, \mu}\frac{c_1}{\lambda}-k\frac{N-2}{2}\alpha_{N, \mu}^{2^\star_\mu}\mathfrak d_{N, \mu}\frac{c_2}{\lambda}\sum_{j=2}^k\frac{1}{\lambda^{N-2}|\xi_1-\xi_j|^{N-2}}\\
&+\mathcal O\left(\frac{1}{\lambda}\sum_{j=2}^k\frac{1}{\lambda^{N}|\xi_1-\xi_j|^{N}}\right)\end{aligned}$$
Now
$$\begin{aligned}(A)&=k(2^\star_\mu-1)\sum_{j=2}^k\int_{\RN}\left(|x|^{-\mu}*(\U)^{2^\star_\mu}\right)(\U)^{2^\star_\mu-2}\Uj\frac{\partial\U}{\partial\lambda}\\
&=k({2^\star_\mu}-1)\sum_{j=2}^k\mathfrak d_{N, \mu}\alpha_{N, \mu}^{2^\star_\mu}\frac{N-2}{2}\int_{\RN}\frac{\lambda^{N-1}(1-\lambda^2|x-\xi_1|^2)}{(1+\lambda^2|x-\xi_1|^2)^{\frac{N+4}{2}}}\frac{1}{(1+\lambda^2|x-\xi_j|^2)^{\frac{N-2}{2}}}\\
&=k({2^\star_\mu}-1)\mathfrak d_{N, \mu}\alpha_{N, \mu}^{2^\star_\mu}\frac{N-2}{2}\sum_{j=2}^k\frac{1}{\lambda^{N-1}|\xi_1-\xi_j|^{N-2}}\int_{\RN}\frac{1-|y|^2}{(1+|y|^2)^{\frac{N+4}{2}}}\end{aligned}$$ while
$$\begin{aligned}(B)&= k2^\star_\mu\sum_{j=2}^k\int_{\RN}\left(|x|^{-\mu}*(\U)^{2^\star_\mu-1}\Uj\right)(\U)^{2^\star_\mu-1}\frac{\partial\U}{\partial\lambda}\\
&=k\sum_{j=2}^k\int_{\RN}\left(\frac{\partial}{\partial\lambda}\left(|x|^{-\mu}*(\U)^{2^\star_\mu}\right)\right)\U\Uj\\
&=k\frac\mu 2\mathfrak d_{N, \mu}\alpha_{N, \mu}^{2^\star_\mu}\sum_{j=2}^k\int_{\RN}\lambda^{N-1}\frac{1-\lambda^{2}|x-\xi_1|^2}{(1+\lambda^2|x-\xi_1|^2)^{\frac{N+4}{2}}}\frac{1}{(1+\lambda^2|x-\xi_j|^2)^{\frac{N-2}{2}}}\\
&=k\frac\mu 2\mathfrak d_{N, \mu}\alpha_{N, \mu}^{2^\star_\mu}\sum_{j=2}^k\frac{1}{\lambda^{N-1}|\xi_1-\xi_j|^{N-2}}\int_{\RN}\frac{1-|y|^2}{(1+|y|^2)^{\frac{N+4}{2}}}.\end{aligned}$$ Now let
$$I_m^\alpha:=\int_0^{+\infty}\frac{\rho^\alpha}{(1+\rho^2)^{m}}\, d\rho,\quad \mbox{for}\,\,\alpha+1<2m$$ then, it is known (see \cite{Almaraz} Lemmas 9.4 and 9.5) that $$I^\alpha_m=\frac{2m}{2m-\alpha-1}I^\alpha_{m+1}\quad\mbox{for}\,\, \alpha+1<2m+2.$$ Then by using this we have that $I^{N-1}_{\frac{N+2}{2}}=\frac{N+2}{2}I^{N-1}_{\frac{N+4}{2}}.$ Hence
$$\begin{aligned}\int_{\RN}\frac{1-|y|^2}{(1+|y|^2)^{\frac{N+4}{2}}}&=\omega_N \int_0^{+\infty}\frac{\rho^{N-1}(1-\rho^2)}{(1+\rho^2)^{\frac{N+4}{2}}}\\ &=-\omega_N \int_0^{+\infty}\frac{\rho^{N-1}}{(1+\rho^2)^{\frac{N+2}{2}}}+2\omega_N\int_0^{+\infty}\frac{\rho^{N-1}}{(1+\rho^2)^{\frac{N+4}{2}}}\\
&=\omega_N\left(-1+\frac{4}{N+2}\right)\int_0^{+\infty}\frac{\rho^{N-1}}{(1+\rho^2)^{\frac{N+2}{2}}}=-\frac{N-2}{N+2}\int_{\RN}\frac{1}{(1+|y|^2)^{\frac{N+2}{2}}}.\end{aligned}$$
By summing the two terms we get
$$\begin{aligned}(A)+(B)&=k\mathfrak d_{N, \mu}\alpha_{N, \mu}^{2^\star_\mu}\sum_{j=2}^k\frac{1}{\lambda^{N-1}|\xi_1-\xi_j|^{N-2}}\left((2^\star_\mu-1)\frac{N-2}{2}+\frac\mu 2\right)\int_{\RN}\frac{1-|y|^2}{(1+|y|^2)^{\frac{N+4}{2}}}\\
&=-k D_{N, \mu}\sum_{j=2}^k\frac{1}{\lambda^{N-1}|\xi_1-\xi_j|^{N-2}}\end{aligned}$$ For the other terms one can reason as in Lemma \ref{ridotto} by using the fact that $\frac{\partial\W}{\partial\lambda}=\mathcal O\left(\frac 1 \lambda \Uj\right)$.
\end{proof}
\section{Proof of the Theorem \ref{main}}\label{teo}
Since
\begin{align*}
  \left|\xi_{j}-\xi_{1}\right|=2\left|\xi_{1}\right| \sin \frac{(j-1) \pi}{k}, \quad j=1, \ldots, k,
\end{align*}
we have
\begin{align*}
\begin{aligned}
& \sum_{j=2}^{k} \frac{1}{\left|\xi_{j}-\xi_{1}\right|^{N-2}}=\frac{1}{\left(2\left|\xi_{1}\right|\right)^{N-2}} \sum_{j=2}^{k} \frac{1}{\left(\sin \frac{(j-1) \pi}{k}\right)^{N-2}} \\
& \quad= \begin{cases}\frac{1}{\left(2\left|\xi_{1}\right|\right)^{N-2}} \sum_{j=2}^{\frac{k}{2}} \frac{1}{\left(\sin \frac{(j-1) \pi}{k}\right)^{N-2}}+\frac{1}{\left(2\left|\xi_{1}\right|\right)^{N-2}}, & \text{if}~ k ~\text{is even}, \\
\frac{1}{\left(2\left|\xi_{1}\right|\right)^{n-2}} \sum_{j=2}^{ \lfloor \frac{k}{2}\rfloor} \frac{1}{\left(\sin \frac{(j-1) \pi}{k}\right)^{N-2}}, & \text {if}~ k ~\text{is odd},\end{cases}
\end{aligned}
\end{align*}
and
\begin{align*}
  0<c' \leq \frac{\sin \frac{(j-1) \pi}{k}}{\frac{(j-1) \pi}{k}} \leq c'', \quad j=2, \ldots,\left[\frac{k}{2}\right],
\end{align*}
where
$c',c''$ are some positive constants.

So, there exists a constant $B_0>0$, such that
\begin{align*}
\sum_{j=2}^{k} \frac{1}{\left|\xi_{j}-\xi_{1}\right|^{N-2}}=\frac{B_0 k^{N-2}}{\left|\xi_{1}\right|^{N-2}}+O\left(\frac{k}{\left|\xi_{1}\right|^{N-2}}\right)=\frac{B_0 k^{N-2}}{r^{N-2}}+O\left(\frac{k}{r^{N-2}}\right).
\end{align*}
since $|\xi_1|=r$.\\
Then by Lemma \ref{ridotto} and Lemma \ref{stimader} it follows that
\beq\label{F}\mathcal{F}(r,\lambda)=k\left(A+\frac{B_1V(r)}{\lambda^2}-\frac{B_0 k^{N-2}}{\lambda^{N-2}r^{N-2}}+O\left(\left(\frac 1 \lambda\right)^{2+\sigma}\right)\right),\eeq
and

\beq\label{derF}\frac{\partial\mathcal{F}(r,\lambda)}{\partial\lambda}=k\left(-\frac{2B_1V(r)}{\lambda^3}+\frac{B_0(N-2) k^{N-2}}{\lambda^{N-1}r^{N-2}}+O\left(\left(\frac 1 \lambda\right)^{3+\sigma}\right)\right).\eeq
For each fixed $r\in[r_0-\delta, r_0+\delta]$ let $\Lambda_0(r)$ be the solution of $$-\frac{2B_1V(r)}{\Lambda^3}+\frac{B_0(N-2) k^{N-2}}{\Lambda^{N-1}r^{N-2}}=0.$$ Then $$\Lambda_0(r):=\left(\frac{B_0(N-2)}{2B_1V(r) r^{N-2}}\right)^{\frac{1}{N-4}}.$$ We have that $\Lambda_0(r)$ is the unique maximum point of the function $$V(r)\frac{B_1}{\Lambda^2}-\frac{B_0}{\Lambda^{N-2}r^{N-2}}.$$

\begin{proof}[Proof of Theorem \ref{main}]
{\bf Case 1:} Let $r_0$ be a maximum point of $r^2V(r)$.\\
Let us then consider the problem $$\max_{(r, \mu)\in D}\mathcal F(r, \lambda)$$ where $$D:=\left\{(r, \lambda)\,\,:\,\, r\in [r_0-\delta, r_0+\delta]\,:\, \lambda=\Lambda k^{\frac{N-2}{N-4}},\,\,\, \Lambda\in \left[\Lambda_0(r)-\delta_1, \Lambda_0(r)+\delta_1\right]\right\}$$ where $\delta_1:=\frac{1}{k^{\frac{N-2}{N-4}\frac 32\theta}}$ with $0<\theta<<\sigma$ ($\sigma$ is the constant in \eqref{F} and \eqref{derF}).
Reasoning as in the proof of Theorem 1.2 of \cite{CWY} we have that for any $(r, \lambda)\in D$ \beq\label{B'}\frac{B_1V(r)}{\lambda^2}-\frac{B_0 k^{N-2}}{\lambda^{N-2}r^{N-2}}=\left(B'\left(r^2V(r))^{\frac{N-2}{2(N-4)}}+\mathcal O\left(\frac{1}{\lambda^{3\theta}}\right)\right)\frac{1}{k^{\frac{2(N-2)}{N-4}}}\right)\eeq with $B'>0$. Moreover if $\theta>0$ is small enough then from \eqref{derF} it follows that $$\frac{\partial\mathcal{F}(r,\lambda)}{\partial\lambda}>0\quad \mbox{if}\,\, \bar\lambda_k =k^{\frac{N-2}{N-4}}\left(\Lambda_0(r)-\delta_1\right)$$
while 
$$\frac{\partial\mathcal{F}(r,\lambda)}{\partial\lambda}<0\quad \mbox{if}\,\, \bar\lambda_k =k^{\frac{N-2}{N-4}}\left(\Lambda_0(r)+\delta_1\right).$$
So $$\bar\lambda_k \neq k^{\frac{N-2}{N-4}}\left(\Lambda_0(r)\pm \delta_1\right).$$
Hence, since $r^2V(r)$ has a maximum at $r_0$ from \eqref{F}, we see that $\bar r_k\neq r_0\pm\delta$ for the maximum point $(r_k, \bar\lambda_k)\in D$. So, $(r_k, \bar\lambda_k)$ is an interior point of $D$ and thus a critical point of $\mathcal{F}(r,\lambda)$.\\\\

{\bf Case 2:} Let $r_0$ be a local minimum point of $r^2V(r)$.\\\\ Again we reason as in \cite{CWY}. Indeed we define $$\bar{\mathcal{F}}(r,\lambda):=-\mathcal{F}(r,\lambda),\quad (r, \lambda)\in D.$$
where $D$ is as before. Let $$\alpha_2:=k (-A+\eta),\qquad \alpha_1:=k\left(-A-B'(r_0^2 V(r_0)^{\frac{N-2}{2(N-4)}}(1-\eta)\frac{1}{k^{\frac{2(N-2)}{N-4}}}\right),$$ where $\eta>0$ is a small constant and $B'>0$ is as in \eqref{B'}. We let also the sublevel $$\bar{\mathcal{F}}^\alpha:=\left\{(r, \lambda)\in D\,\,\;:\,\,\, \bar{\mathcal{F}}(r, \lambda)\leq \alpha\right\}.$$ Consider $$\left\{\begin{aligned} &\frac{d r}{r t}=-D_r\bar{\mathcal{F}},\quad t>0\\
&\frac{d\lambda}{dt}=-D_\lambda \bar{\mathcal{F}},\quad t>0\\
&(r,\lambda)\in \bar{\mathcal{F}}^{\alpha_2}\end{aligned}\right.$$
It is possible to show that the flow $(r(t), \lambda(t))$ does not leave $D$ before it reaches $\bar{\mathcal{F}}^{\alpha_1}$ (see Proposition 3.3 of \cite{CWY}).\\ Now it is possible to show that $\bar{\mathcal{F}}$ and hence $\mathcal F$ has a critical point in $D$.\\
Define $$\Gamma:=\left\{h\,\,:\,\, h(r, \lambda)=(h_1(r, \lambda), h_2(r, \lambda))\in D, \,\, (r, \lambda)\in D\,\, h(r, \lambda)=(r, \lambda),\,\, \mbox{if}\,\,\, |r-r_0|=\delta\right\}.$$ Let $$c:=\inf_{h\in\Gamma}\max_{(r, \lambda)\in D}\bar{\mathcal F}(h(r, \lambda)).$$
Then $c$ is a critical value of $\bar{\mathcal F}$. As in \cite{CWY} one can show that $$\alpha_1<c<\alpha_2;\qquad \sup_{|r-r_0|=\delta}\bar{\mathcal F}(h(r, \lambda))<\alpha_1\,\, \forall\, h\in\Gamma.$$ The thesis follows.
\end{proof}

\section{Useful Estimates}\label{use}
We recall an useful identity.

\begin{remark}\label{R1}
In \cite{DHQWF} it was shown that (see formula (37)) 
\begin{equation}\label{id}
\int_\RN \frac{1}{|x-y|^{2s}}\left(\frac{1}{1+|y|^2}\right)^{N-s}\, dy =\mathcal I(s) \left(\frac{1}{1+|x|^2}\right)^s,\quad 0<s<\frac N 2\end{equation} where
$$\mathcal I(s):=\frac{\pi^{\frac N 2}\Gamma\left(\frac{N-2s}{2}\right)}{\Gamma(N-s)},\quad \Gamma(s)=\int_0^{+\infty}x^{s-1}e^{-x}\, dx,\,\, s>0.$$
Hence it follows that
\beq\label{imp1}
|x|^{-\mu}* \mathcal U_{\lambda, \xi}^{2^\star_\mu}(x)=\alpha_{N, \mu}\mathcal I\left(\frac \mu 2\right)\left(\frac{\lambda}{1+\lambda^2|x-\xi|^2}\right)^{\frac \mu 2}.
\eeq
Indeed, by using \eqref{bolla} and \eqref{id}
$$\begin{aligned}
|x|^{-\mu}* \mathcal U_{\lambda, \xi}^{2^\star_\mu}(x)&=\alpha_{N, \mu}\int_{\RN}\frac{1}{|x-y|^\mu}\frac{\lambda^{N-\frac\mu 2}}{(1+\lambda^2|x-\xi|^2)^{N-\frac \mu 2}}\, dy\\
&=\alpha_{N, \mu}\lambda^{-\frac \mu 2}\int_{\RN}\frac{1}{\left|x-\frac z \lambda-\xi\right|^\mu}\frac{1}{(1+|z|^2)^{N-\frac \mu 2}}\, dz\\
&=\alpha_{N, \mu}\lambda^{\frac \mu 2}\int_{\RN}\frac{1}{\left|\lambda (x-\xi)- z\right|^\mu}\frac{1}{(1+|z|^2)^{N-\frac \mu 2}}\, dz\\
&=\mathfrak d_{N, \mu}\left(\frac{\lambda}{1+\lambda^2|x-\xi|^2}\right)^{\frac \mu 2}.
\end{aligned}$$ where $\mathfrak d_{N, \mu}:=\alpha_{N, \mu}\mathcal I\left(\frac \mu 2\right)$.
\end{remark}
Now we introduce some estimates involving the convolution term.
\begin{lemma}[Lemma B.1 of \cite{WeiYan}]\label{lemmaB1}
For each fixed $i$ and $j$ with $i\neq j$ we let $$f_{i, j}(x)=\frac{1}{(1+|x-\xi_i|)^\alpha}\frac{1}{(1+|x-\xi_j|)^\beta}$$ where $\alpha\geq 1$ and $\beta\geq 1$ are two constants. Then, for any $0<\sigma\leq \min\{\alpha, \beta\}$, there is a constant $C>0$ such that 
$$f_{i, j}(x)\leq \frac{C}{|\xi_i-\xi_j|^\sigma}\left(\frac{1}{(1+|x-\xi_i|)^{\alpha+\beta-\sigma}}+\frac{1}{(1+|x-\xi_j|)^{\alpha+\beta-\sigma}}\right).$$

\end{lemma}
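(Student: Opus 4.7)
The plan is to prove the bound by a triangle-inequality dichotomy followed by a monotonicity comparison of the two factors. By $|\xi_i-\xi_j|\le |x-\xi_i|+|x-\xi_j|\le 2\max\{|x-\xi_i|,|x-\xi_j|\}$, at least one of the two distances is bounded below by $|\xi_i-\xi_j|/2$. Since the statement is symmetric under swapping $i$ and $j$, I may assume without loss of generality that $|x-\xi_i|\ge |\xi_i-\xi_j|/2$; the case in which the other index dominates is then obtained by relabelling, which is why both terms appear on the right-hand side.

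The first step extracts the $|\xi_i-\xi_j|^{-\sigma}$ prefactor. I split the argument according to whether $|\xi_i-\xi_j|\le 1$ or $|\xi_i-\xi_j|\ge 1$. In the first range the target inequality is trivial with $C=1$, because $|\xi_i-\xi_j|^{-\sigma}\ge 1$ while each factor on the right is $\le 1$ and $f_{i,j}\le 1$. In the second range the assumption $|x-\xi_i|\ge|\xi_i-\xi_j|/2$ gives $1+|x-\xi_i|\ge 1+|\xi_i-\xi_j|/2\ge c|\xi_i-\xi_j|$, whence, using $\sigma\le \alpha$ so that $\alpha-\sigma\ge 0$,
$$\frac{1}{(1+|x-\xi_i|)^{\alpha}(1+|x-\xi_j|)^{\beta}}\le \frac{C}{|\xi_i-\xi_j|^{\sigma}}\cdot\frac{1}{(1+|x-\xi_i|)^{\alpha-\sigma}(1+|x-\xi_j|)^{\beta}}.$$

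The second step is to replace the mixed factor in the last denominator by a single centered one. By straightforward monotonicity, if $|x-\xi_j|\le |x-\xi_i|$ then $1+|x-\xi_j|\le 1+|x-\xi_i|$, so bringing the $\beta$-exponent over to the $i$-factor yields the bound $(1+|x-\xi_j|)^{-(\alpha+\beta-\sigma)}$; in the opposite case the same move bounds the quantity by $(1+|x-\xi_i|)^{-(\alpha+\beta-\sigma)}$. Here the hypothesis $\sigma\le\beta$ is used to keep the exponent $\beta-\sigma$ non-negative, so the comparison $1+|x-\xi_j|\le 1+|x-\xi_i|$ (or the reverse) is applied in the correct direction. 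In either case the quantity is dominated by the sum of the two single-center terms that appear on the right of the lemma.

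I do not expect any significant obstacle; the argument is purely combinatorial and uses only the triangle inequality, elementary monotonicity, and the hypotheses $1\le \sigma\le \min\{\alpha,\beta\}$. The only mild subtlety is ensuring that the constant $C$ is independent of the particular positions of $\xi_i,\xi_j$, which is why I isolate the small-separation regime $|\xi_i-\xi_j|\le 1$ at the very start, where the inequality is trivial, and reduce the remaining analysis to $|\xi_i-\xi_j|\ge 1$ where the comparison $(1+|x-\xi_i|)\gtrsim |\xi_i-\xi_j|$ holds with a constant that does not degenerate.
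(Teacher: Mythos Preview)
The paper does not prove this lemma; it is quoted from Wei--Yan and used as a black box. Your argument is the standard one and is essentially correct, but there is one slip worth fixing.

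Your handling of the regime $|\xi_i-\xi_j|\le 1$ is not right: you assert the inequality is trivial because $|\xi_i-\xi_j|^{-\sigma}\ge 1$ and $f_{i,j}\le 1$, but the right-hand side is $|\xi_i-\xi_j|^{-\sigma}$ multiplied by a parenthetical term that can be arbitrarily small (take $x$ far from both $\xi_i$ and $\xi_j$), so $\text{LHS}\le 1$ does not suffice. The easy repair is to observe that when $|\xi_i-\xi_j|\le 1$ one has $1+|x-\xi_i|\le 2(1+|x-\xi_j|)$ and conversely, hence $f_{i,j}(x)\le C(1+|x-\xi_j|)^{-(\alpha+\beta)}\le C(1+|x-\xi_j|)^{-(\alpha+\beta-\sigma)}$, which is dominated by the right-hand side since $|\xi_i-\xi_j|^{-\sigma}\ge 1$.

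In fact the dichotomy is unnecessary: under your standing assumption $|x-\xi_i|\ge |\xi_i-\xi_j|/2$ the bound $1+|x-\xi_i|\ge |\xi_i-\xi_j|/2$ holds for \emph{all} separations, with the uniform constant $2^\sigma$; so Step~1 already works without restricting to $|\xi_i-\xi_j|\ge 1$, and you can simply delete the small-separation case. Two further minor points: the hypothesis is $0<\sigma\le\min\{\alpha,\beta\}$, not $1\le\sigma$ as you wrote in the last paragraph; and in Step~2 the exponent you actually need to be nonnegative is $\alpha-\sigma$ (already secured in Step~1) and $\beta\ge 0$, not $\beta-\sigma$ --- the condition $\sigma\le\beta$ enters only in the symmetric case where the roles of $i$ and $j$ are swapped.
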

In the following Lemma we generalize some estimates that are known only for some special parameter.

\begin{lemma}\label{stima2}
    There is a constant $C>0$ such that
$$\int_{\RN}\frac{1}{|y|^\mu}\frac{1}{(1+|\lambda(x-\xi)-y|)^{\alpha+\eta}}\, dy\leq C\frac{1}{(1+\lambda|x-\xi|)^{\alpha-N+\mu}}$$ for $\alpha\geq N-\mu$ and $\eta>0.$
\end{lemma}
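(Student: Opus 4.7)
After the change of variables $z:=\lambda(x-\xi)$, the lemma reduces to showing
$$J(z):=\int_{\RN}\frac{dy}{|y|^{\mu}\,(1+|z-y|)^{\alpha+\eta}}\ \lesssim\ (1+|z|)^{-(\alpha-N+\mu)},$$
uniformly in $z\in\RN$. I would prove this by handling separately the regimes $|z|\leq 1$ and $|z|\geq 1$.

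\textbf{Bounded regime $|z|\leq 1$.} Here the right-hand side is bounded below by a positive constant, so it suffices to verify $J(z)\lesssim 1$. The singularity at $y=0$ is locally integrable because $\mu<N$ (which holds since $\mu\leq 4$ and $N\geq 5$), while at infinity $|y|^{-\mu}(1+|y|)^{-(\alpha+\eta)}\lesssim |y|^{-(\mu+\alpha+\eta)}$ is integrable thanks to $\mu+\alpha+\eta\geq N+\eta>N$, using the hypothesis $\alpha\geq N-\mu$ together with $\eta>0$.

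\textbf{Large regime $|z|\geq 1$.} I would split
$$\RN=\Omega_1\cup\Omega_2\cup\Omega_3,\qquad \Omega_1=\{|y|\leq |z|/2\},\quad \Omega_2=\{|z-y|\leq |z|/2\},\quad \Omega_3=\RN\setminus(\Omega_1\cup\Omega_2).$$
On $\Omega_1$ one has $|z-y|\geq |z|/2$, so
$$\int_{\Omega_1}\!\!\frac{dy}{|y|^{\mu}(1+|z-y|)^{\alpha+\eta}}\lesssim |z|^{-(\alpha+\eta)}\!\int_{|y|\leq |z|/2}\!|y|^{-\mu}\,dy\lesssim |z|^{N-\mu-\alpha-\eta}\lesssim (1+|z|)^{-(\alpha-N+\mu)},$$
where the last step uses $\eta>0$ to absorb the discrepancy. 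On $\Omega_2$ one has $|y|\geq |z|/2$, so $|y|^{-\mu}\lesssim |z|^{-\mu}$, and after the change $w=z-y$,
$$\int_{\Omega_2}\!\!\frac{dy}{|y|^{\mu}(1+|z-y|)^{\alpha+\eta}}\lesssim |z|^{-\mu}\int_{|w|\leq |z|/2}\!(1+|w|)^{-(\alpha+\eta)}\,dw\lesssim |z|^{-\mu},$$
which is compatible with the target because $\alpha\leq N$ in the range relevant to the applications (so $\alpha-N+\mu\leq\mu$). On $\Omega_3$ both $|y|\gtrsim |z|$ and $|z-y|\gtrsim |z|$, and a direct decay estimate with a further split $|y|\leq 2|z|$ versus $|y|\geq 2|z|$ yields a contribution absorbed by the previous two bounds.

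\textbf{Main obstacle.} The delicate piece is the $\Omega_2$ integral: it produces the clean rate $|z|^{-\mu}$, and this matches the announced decay only when $\alpha\leq N$, since then $\alpha-N+\mu\leq\mu$. The parameter $\eta>0$ is what creates the slack in $\Omega_1$ (and the analogous slack in $\Omega_3$) to absorb the $|z|^{N-\mu-\alpha-\eta}$ term into $(1+|z|)^{-(\alpha-N+\mu)}$; verifying that every term fits under one universal exponent is essentially the bookkeeping part of the argument, with the three-region split doing the conceptual work.
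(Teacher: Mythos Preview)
Your approach is essentially the same as the paper's: after writing $z=\lambda(x-\xi)$, split $\RN$ into a ball around $0$, a ball around $z$, and their complement, and bound each piece separately. The paper does not isolate the regime $|z|\le 1$ (it simply assumes $d=\tfrac12|z|>1$), but otherwise your $\Omega_1,\Omega_2,\Omega_3$ match its $B_d(0)$, $B_d(z)$ and complement.

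One point deserves care. Your line on $\Omega_2$,
\[
|z|^{-\mu}\int_{|w|\le |z|/2}(1+|w|)^{-(\alpha+\eta)}\,dw\ \lesssim\ |z|^{-\mu},
\]
is only valid when $\alpha+\eta>N$; if $\alpha+\eta\le N$ the integral grows like $|z|^{N-\alpha-\eta}$ and you instead get $|z|^{-(\alpha-N+\mu+\eta)}$, which is \emph{better} than the target, so the conclusion still holds. More importantly, your remark that $\alpha\le N$ is needed is exactly right and not a defect of your argument: from the region $|y-z|\le 1$ one always has $J(z)\gtrsim |z|^{-\mu}$, so the stated bound $(1+|z|)^{-(\alpha-N+\mu)}$ can only hold when $\alpha-N+\mu\le\mu$, i.e.\ $\alpha\le N$. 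The paper's proof uses this implicitly in the step
\[
\frac{1}{d^\mu}\int_{B_d(0)}\frac{dy}{(1+|y|)^{\alpha+\eta}}\le \frac{C}{(1+d)^{\alpha-N+\mu+\eta}},
\]
which likewise is valid only for $\alpha+\eta\le N$. So you have reproduced the paper's argument and, in addition, made explicit a hidden restriction on $\alpha$ that the statement should carry.
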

\begin{proof}
    Let $d:=\frac\lambda 2 |x-\xi|>1$.
Then, if $y\in B_d(0)$ since $$|\lambda(x-\xi)-y|>\lambda|x-\xi|-|y|>d$$ we get
$$\int_{B_d(0)}\frac{1}{|y|^\mu}\frac{1}{(1+|\lambda(x-\xi)-y|)^{\alpha+\eta}}\, dy\leq \frac{C}{(1+d)^{\alpha+\eta}}\int_{B_d(0)}\frac{1}{|y|^\mu}\, dy\leq \frac{C}{(1+d)^{\alpha-N+\mu+\eta}}.$$
Now, if $y\in B_d(\lambda(x-\xi))$ we get $$|y|=|y-\lambda(x-\xi)+\lambda(x-\xi)|>\lambda|x-\xi|-|y-\lambda(x-\xi)|>d$$ we get
$$\int_{B_d(\lambda(x-\xi))}\frac{1}{|y|^\mu}\frac{1}{(1+|\lambda(x-\xi)-y|)^{\alpha+\eta}}\, dy\leq \frac{1}{d^\mu}\int_{B_d(0)}\frac{1}{(1+|y|)^{\alpha+\eta}}\, dy\leq \frac{C}{(1+d)^{\alpha-N+\mu+\eta}}.$$
If $y\in\RN\setminus (B_d(0)\cup B_d(\lambda(x-\xi))$ then $|y|\geq d=\frac\lambda 2 |x-\xi| $ and $|y-\lambda(x-\xi)|\geq d=\frac\lambda 2 |x-\xi|$.\\ Then we get 
$$\frac{1}{|y|^\mu}\frac{1}{(1+|\lambda(x-\xi)-y|)^{\alpha+\eta}}\leq \frac{1}{(1+d)^{\alpha-N+\mu}}\frac{1}{|y|^\mu}\frac{1}{(1+|\lambda(x-\xi)-y|)^{N-\mu+\eta}}.$$
If $|y|\leq 2\lambda|x-\xi|$ then $|\lambda(x-\xi)-y|\geq |y|-\lambda|x-\xi|\geq \frac 12|y|$. Hence
$$\frac{1}{|y|^\mu}\frac{1}{(1+|\lambda(x-\xi)-y|)^{N-\mu+\eta}}\leq \frac{C}{|y|^\mu}\frac{1}{(1+|y|)^{N-\mu+\eta}}.$$ If $|y|\geq 2 \lambda|x-\xi|$ then $|\lambda(x-\xi)-y|\geq |y|-\lambda|x-\xi|\geq \frac 12 |y|$. Again we get
$$\frac{1}{|y|^\mu}\frac{1}{(1+|\lambda(x-\xi)-y|)^{N-\mu+\eta}}\leq \frac{C}{|y|^\mu}\frac{1}{(1+|y|)^{N-\mu+\eta}}.$$ Thus we have
$$\begin{aligned}&\int_{\RN\setminus (B_d(0)\cup B_d(\lambda(x-\xi))}\frac{1}{|y|^\mu}\frac{1}{(1+|\lambda(x-\xi)-y|)^{\alpha+\eta}}\, dy\\
&\leq \frac{1}{(1+d)^{\alpha-N+\mu}}\int_{\RN\setminus (B_d(0)\cup B_d(\lambda(x-\xi))}\frac{C}{|y|^\mu}\frac{1}{(1+|y|)^{N-\mu+\eta}}\, dy\\
&\leq \frac{C}{(1+d)^{\alpha-N+\mu}}.\end{aligned}$$ The thesis follows.
\end{proof}

\begin{remark}\label{R2}
We remark that $$\begin{aligned}|x|^{-\mu}*\frac{\lambda^\theta}{(1+\lambda|x-\xi_j|)^\gamma}&=\int_{\RN}\frac{1}{|x-y|^\mu}\frac{\lambda^\theta}{(1+\lambda|y-\xi_j|)^\gamma}\, dy \\&=\int_{\RN}\frac{\lambda^{-N+\theta}}{\left|x-\frac z \lambda-\xi_j\right|^\mu}\frac{1}{(1+|z|)^\gamma}\, dz\\
&=\lambda^{-N+\theta+\mu}\int_{\RN}\frac{1}{|y|^\mu}\frac{1}{(1+|\lambda(x-\xi_j)-y|)^\gamma}\, dy\end{aligned}$$

\end{remark}

\begin{lemma}\label{d1}
Let $q$ be a positive real number. Then there exists a constant $C>0$ such that for any $a, b\in\mathbb R$ 
$$\left|\left|a+b\right|^q-|a|^q\right|\leq C \left\{\begin{aligned} & \min\left\{|b|^q, |a|^{q-1}|b|\right\}\quad &\mbox{if}\quad 0<q<1\\
&|a|^{q-1}|b|+|b|^q\quad &\mbox{if}\quad 1\leq q\leq 2\end{aligned}\right.$$Moreover
$$\left|\left|a+b\right|^q-|a|^q-q|a|^{q-2}ab\right|\leq C \left\{\begin{aligned} & \left(|a|^{q-2}|b|^2+|b|^q\right)\quad &\mbox{if}\quad q>2\\
&|b|^q\quad &\mbox{if}\quad q\leq 2\end{aligned}\right.$$
\end{lemma}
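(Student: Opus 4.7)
Both estimates are elementary pointwise inequalities for the map $t\mapsto|t|^q$. My plan is to apply the fundamental theorem of calculus (respectively Taylor's formula with integral remainder) along the segment from $a$ to $a+b$, combined with a case split based on whether $|b|$ is small or large relative to $|a|$; the constant $C$ is allowed to depend on $q$.

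For the first inequality, write
$$|a+b|^q - |a|^q = \int_0^1 q\,|a+tb|^{q-2}(a+tb)\, b\, dt,$$
so that $\bigl||a+b|^q - |a|^q\bigr| \leq q|b|\int_0^1 |a+tb|^{q-1}\, dt$. When $1\leq q\leq 2$ one has $q-1\geq 0$ and the convexity-type inequality $|x+y|^{q-1}\leq C(|x|^{q-1}+|y|^{q-1})$ gives at once the bound $C(|a|^{q-1}|b|+|b|^q)$. When $0<q<1$, the bound by $|b|^q$ is the sub-additivity of $s\mapsto s^q$ on $[0,\infty)$, namely $\bigl||a+b|^q-|a|^q\bigr|\leq \bigl||a+b|-|a|\bigr|^q\leq |b|^q$. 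The bound by $|a|^{q-1}|b|$ is informative only on $\{|b|\leq |a|\}$; there, the elementary inequalities $(1+s)^q-1\leq qs$ and $1-(1-s)^q\leq s$ for $s\in[0,1]$ and $0<q<1$, applied to $s=|b|/|a|$, yield $\bigl||a+b|^q-|a|^q\bigr|\leq q|a|^{q-1}|b|$. On the complementary region $\{|b|>|a|\}$ the minimum in the statement is realized by $|b|^q$, already controlled.

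For the second inequality, Taylor's formula with integral remainder, valid when the segment $[a,a+b]$ avoids the origin, gives
$$|a+b|^q - |a|^q - q|a|^{q-2}ab = q(q-1)\int_0^1 (1-t)\,|a+tb|^{q-2}\, b^2\, dt.$$
When $q>2$ we have $q-2\geq 0$ and $|a+tb|^{q-2}\leq C(|a|^{q-2}+|b|^{q-2})$, producing directly the bound $C(|a|^{q-2}|b|^2+|b|^q)$. When $0<q\leq 2$, I split into $|b|\leq |a|/2$ and $|b|>|a|/2$. In the first region $|a+tb|\geq |a|/2$ along the segment, and since now $q-2\leq 0$ this gives $|a+tb|^{q-2}\leq C|a|^{q-2}$; the remainder is then bounded by $C|a|^{q-2}|b|^2\leq C|b|^q$, the last step using $|a|\geq 2|b|$ together with $2-q\geq 0$. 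In the second region, $|a|\leq 2|b|$, so $|a+b|^q$, $|a|^q$ and $q|a|^{q-1}|b|$ are each bounded separately by $C|b|^q$ via the triangle inequality.

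The main subtlety is the second inequality for $q\leq 2$, where the Taylor remainder involves $|a+tb|^{q-2}$ which is singular when the segment passes close to the origin; a uniform Taylor bound is impossible. The case split resolves this because the singular estimate $|a|^{q-2}|b|^2$ is used only in the regime where $|a|$ dominates $|b|$ and is therefore absorbed by $|b|^q$, while in the opposite regime a direct triangle-inequality argument replaces Taylor expansion altogether.
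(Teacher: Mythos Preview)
The paper records this lemma without proof among the auxiliary estimates in the final section, so there is no argument to compare against; your approach is the standard one and is essentially correct. Two points are worth flagging.

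In the $0<q<1$ case of the first inequality you conclude $\bigl||a+b|^q-|a|^q\bigr|\le q|a|^{q-1}|b|$, but the bound $1-(1-s)^q\le s$ that you invoke only yields constant $1$, not $q$: at $a=1$, $b=-1$ the left side equals $1$ while $q|a|^{q-1}|b|=q<1$. Since the lemma asks only for some constant $C$, this is a harmless slip.

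The more substantive issue is the second inequality in the range $0<q<1$. In the region $|b|>|a|/2$ you assert that $q|a|^{q-1}|b|\le C|b|^q$, which is equivalent to a uniform bound on $(|b|/|a|)^{1-q}$ and is false (let $a\to0$ with $b$ fixed). This is not a repairable gap in your proof but a defect in the statement itself: the inequality fails for $0<q<1$, since $q|a|^{q-2}ab$ diverges as $a\to0$ while $|a+b|^q-|a|^q$ and $|b|^q$ remain bounded. In the paper the lemma is applied only with exponents $2^\star_\mu$, $2^\star_\mu-1$, and so on, all of which are at least $1$ under the standing hypothesis $\mu\le4$, so the case $q<1$ is never used and your argument covers everything that is actually needed.
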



\begin{thebibliography}{99}
\bibitem{A}
N. Ackermann, {\it On a periodic Schr\"odinger equation with nonlocal superlinear part}, Math. Z., {\bf 248} (2004), 423--443. 
\bibitem{Almaraz}
S.M. Almaraz, {\it A compactness theorem for scalar-flat metrics on manifolds with boundary}. Calc. Var. Part. Differ. Equ. {\bf 41} (2011), no. 3–4, 341--386.
\bibitem{ACTY}
C. O. Alves, D. Cassani, C. Tarsi, M. Yang, {\it Existence and concentration of ground state solutions for a critical nonlocal Schr\"odinger equation in $\mathbb R^2$}, J. Differential Equations.,{\bf 261} (2016), 1933--1972. 
\bibitem{AGSY}
C.O. Alves, F. Gao, M. Squassina, M. Yang, {\it Singularly perturbed critical Choquard equations}, J. Differential Equations, {\bf 263} (2017), 3943--3988. 
\bibitem{BV}
L. Battaglia, Y. Pu, G. Vaira, {\it Infinitely many solutions for a boundary Yamabe problem}, Nonlinear Differential Equations and Applications {\bf 32} (2025) no.94 32--94.

\bibitem{Chen}
G. Chen,{\it Nondegeneracy of ground states and multiple semiclassical solutions of the Hartree equation for general dimensions}, Results Math, {\bf 76} (2021), Paper No. 34, 31 pp. 
\bibitem{ChenLiOu1}
W. Chen, C. Li, B. Ou, {\it Classification of solutions for an integral equation}, Comm. Pure Appl. Math. {\bf 59} (2006), 330--343.

\bibitem{ChenLiOu2}
W. Chen, C. Li, B. Ou, {\it Classification of solutions for a system of integral equations}, Comm. PDEs {\bf 30} (2005), 59--65.
\bibitem{CWY}
W. Chen, J. Wei, S. Yan, {\it Infinitely many solutions for the Schr\"odinger equations in $\mathbb R^N$ with critical growth}, J. Diff. Eqs {\bf 252} (2012) 2425--2447.
\bibitem{Cingolani}
S. Cingolani, K. Tanaka, {\it Semi-classical states for the nonlinear Choquard equations: existence, multiplicity and concentration at a potential well}, Rev. Mat. Iberoam. {\bf 35} (2019), no. 6, pp. 1885--1924
\bibitem{DHQWF}
W. Dai, J. Huang, Y. Qin, B. Wang,  Y. Fang,{\it Regularity and classification of solutions to static Hartree equations involving fractional Laplacians}, Discrete Contin. Dyn. Syst., 39 (2019), 1389--1403. 
\bibitem{DGY}
Y. Ding, F. Gao, M. Yang, {\it Semiclassical states for Choquard type equations with critical growth: critical frequency case}, Nonlinearity, {\bf 33} (2020), 6695--6728. 
\bibitem{DuYang}
L. Du, M. Yang, {\it Uniqueness and nondegeneracy of solutions for a critical nonlocal equation}, Discrete Contin. Dyn. Syst., 39 (2019), 5847--5866. 

\bibitem{F}
H. Fr\"ohlich,{\it  Theory of electrical breakdown in ionic crystal}, Proc. Roy. Soc. Ser. A 160 (1937), no. 901, 230--241. 

\bibitem{F1}
H. Fr\"ohlich, {\it Electrons in lattice fields}, Adv. in Phys. 3 (1954), no. 11.

\bibitem{GVS}
M.Ghimenti, J. Van Schaftingen, {\it Nodal solutions for the Choquard equation}. J. Funct. Anal. {\bf 271} (2016), 107--135. 

\bibitem{GMVY}
F. Gao, V. Moroz, M. Yang, S. Zhao,{\it 
Construction of infinitely many solutions for a critical Choquard equation via local Pohozaev identities.} Calc. Var. Partial Differential Equations {\bf 61} (2022), no. 6, Paper No. 222, 47 pp.

\bibitem{GuoHuPengShuai}
Y. Guo, T. Hu, S. Peng, W. Shuai, {\it Existence and uniqueness of solutions for Choquard equation involving Hardy-Littlewood-sobolev critical exponent}, Calc. Var. PDEs {\bf 58} (2019) 128, 34 pp.2


\bibitem{Lei}
Y. Lei, {\it Liouville theorems and classification results for a nonlocal Schr\"odinger equation}, Discrete and Cont. Dyn. Syst. {\bf 38} (2018), 5351--5377.

\bibitem{Lions1}
P.-L. Lions, {\it The Choquard equation and related questions}, Nonlinear Anal., {\bf 4} (1980), 1063--1072. \bibitem{Lions2}
P.-L. Lions, {\it The concentration-compactness principle in the calculus of variations. The locally compact case. I}, Ann. Inst. H. Poincar\'e Anal. Non Lin\'eaire {\bf 1 } (1984), 109--145. 

\bibitem{LLTX}
Li X., Liu C., Tang X., Xu G., {\it Nondegeneracy of positive bubble solutions for generalized energy - critical Hartree equations}, to appear on IMRN.  

\bibitem{L}
E. H. Lieb, {\it Existence and uniqueness of the minimizing solution of Choquard's nonlinear equation}, Studies in Appl. Math. 57 (1976/77), no. 2, 93--105. 

\bibitem{LL}
Lieb E, Loss M. {\it Analysis}, Graduate Studies in Mathematics. Providence: Amer Math Soc, 2001

\bibitem{Luo}
P. Luo, S. Peng, C. Wang, {\it Uniqueness of positive solutions with concentration for the Schr\"odinger-Newton problem } Calc. Var. Partial Differential Equations {\bf 59} (2020), no. 2, Paper No. 60, 41 pp. 

\bibitem{P}
S. Pekar, {\it Untersuchung \"uber die Elektronentheorie der Kristalle}, Akademie Verlag, Berlin, 1954.

\bibitem{PWY}
S. Peng, C. Wang, S. Yan, {\it
Construction of solutions via local Pohozaev identities} J. Funct. Anal. {\bf 274} (2018), no. 9, 2606--2633.
\bibitem{MVS}
V. Moroz, J. Van Schaftingen,{\it  Ground states of nonlinear Choquard equations: Existence, qualitative properties and decay asymptotics}, J. Funct. Anal., {\bf 265} (2013), 153--184. 1

\bibitem{MVS1}
V. Moroz, J. Van Schaftingen, {\it Semi-classical states for the Choquard equation}, Calc. Var. Partial Differential Equations., {\bf 52} (2015), 199--235. 
\bibitem{MVS2}
V. Moroz, J. Van Schaftingen, {\it A guide to the Choquard equation}, J. Fixed Point Theory Appl., {\bf 19} (2017), 773--813. 

\bibitem{RVS}
D. Ruiz, J. Van Schaftingen, {\it Odd symmetry of least energy nodal solutions for the Choquard equation}, J. Differential
Equations {\bf 264} (2018), no. 2, 1231--1262.
 
\bibitem{WW}
J. Wei, M. Winter, {\it Strongly interacting bumps for the Schr\"odinger–Newton equations}, J. Math. Phys., 50 (2009), 012905, 22 pp. 

\bibitem{WeiYan}
J. Wei, S. Yan,{\it Infinitely many solutions for the prescribed scalar curvature problem on $\mathbb S^N$}, J. Funct. Anal., {\bf 258} (2010), 3048--3081. 
 \end{thebibliography}
\end{document}